\title{Open determinacy for class games}
\author{Victoria Gitman}
\address[V. Gitman]{The City University of New York, CUNY Graduate Center, Mathematics Program, 365 Fifth Avenue, New York, NY 10016}
\email{vgitman@nylogic.org}
\urladdr{http://boolesrings.org/victoriagitman}
\author{Joel David Hamkins}
 \address[J.~D.~Hamkins]
         {Philosophy, New York University \&
         Mathematics, Philosophy, Computer Science, The Graduate Center of The City University of New York, 365 Fifth Avenue, New York, NY 10016 \& Mathematics, College of Staten Island of CUNY}
\email{jhamkins@gc.cuny.edu}
\urladdr{http://jdh.hamkins.org}
\thanks{The authors would like to acknowledge helpful exchanges with Thomas A.~Johnstone, Stephen G.~Simpson, Philip Welch and Kentaro Fujimoto concerning the work of this paper. The second author is grateful for the support of Simons Foundation grant 209252 and for his Visiting Professorship in 2015 in the Philosophy Department of New York University, where the initial main result was obtained. Both authors are thankful for the support provided in Summer and Fall of 2015 by the Isaac Newton Institute for Mathematical Sciences in Cambridge, U.K., where the work was finalized while they were Visiting Fellows for the program on the Mathematical, Computational and Foundational Aspects of the Higher Infinite. Commentary concerning this paper can be made at \href{http://jdh.hamkins.org/open-determinacy-for-class-games}{http://jdh.hamkins.org/open-determinacy-for-class-games}.}
\thanks{We are especially pleased to be a part of this volume celebrating the 60th birthday of W.~Hugh Woodin, in light of the fact that Woodin served (years ago) as the PhD supervisor of the second author, who himself served (more recently) as the PhD supervisor of the first author. Thus, with Woodin, we span three mathematical generations.}
\newtheorem{theorem}{Theorem}
\newtheorem*{maintheorem*}{Main Theorem}
\newtheorem*{maintheorems*}{Main Theorems}
\newtheorem*{corollary*}{Corollary}
\newtheorem*{corollaries*}{Corollaries}
\newtheorem{sublemma}{Lemma}[theorem]
\newtheorem{lemma}[theorem]{Lemma}
\newtheorem{question}[theorem]{Question}
\newtheorem*{questions*}{Questions}
\newtheorem*{mainquestion*}{Main Question} 
\newtheorem*{openquestion*}{Open Question} 
\newtheorem{definition}[theorem]{Definition}
\newcommand{\QED}{\end{proof}}
\def\proclaim[#1]{{\bf #1}}
\def\BF#1.{{\bf #1.}}
\newcommand{\Godel}{G\"odel}
\newcommand{\Levy}{L\'{e}vy}
\newcommand{\of}{\subseteq}
\newcommand{\set}[1]{\{\,{#1}\,\}}
\newcommand{\elesub}{\prec}
\newcommand{\Con}{\mathop{{\rm Con}}}
\newcommand{\image}{\mathbin{\hbox{\tt\char'42}}}
\newcommand{\restrict}{\upharpoonright} 
\newcommand{\satisfies}{\models}
\newcommand{\concat}{\mathbin{{}^\smallfrown}}
\newcommand{\Union}{\bigcup}
\newcommand{\intersect}{\cap}
\newcommand{\trianglelt}{\lhd}
\newcommand{\smalllt}{\mathrel{\mathchoice{\raise2pt\hbox{$\scriptstyle<$}}{\raise1pt\hbox{$\scriptstyle<$}}{\raise0pt\hbox{$\scriptscriptstyle<$}}{\scriptscriptstyle<}}}
\newcommand{\smallleq}{\mathrel{\mathchoice{\raise2pt\hbox{$\scriptstyle\leq$}}{\raise1pt\hbox{$\scriptstyle\leq$}}{\raise1pt\hbox{$\scriptscriptstyle\leq$}}{\scriptscriptstyle\leq}}}
\newcommand{\ltomega}{{{\smalllt}\omega}}
\newcommand{\boolval}[1]{\mathopen{\lbrack\!\lbrack}\,#1\,\mathclose{\rbrack\!\rbrack}}
\def\[#1]{\boolval{#1}}
\newbox\gnBoxA
\newdimen\gnCornerHgt
\newdimen\gnArgHgt
\def\gcode #1{%
\setbox\gnBoxA=\hbox{$#1$}%
\gnArgHgt=\ht\gnBoxA%
\ifnum     \gnArgHgt<\gnCornerHgt \gnArgHgt=0pt%
\else \advance \gnArgHgt by -\gnCornerHgt%
\fi \raise\gnArgHgt\hbox{\tiny$\ulcorner$} \box\gnBoxA %
\raise\gnArgHgt\hbox{\tiny$\urcorner$}}
\newcommand{\UnderTilde}[1]{{\setbox1=\hbox{$#1$}\baselineskip=0pt\vtop{\hbox{$#1$}\hbox to\wd1{\hfil$\sim$\hfil}}}{}}
\newcommand{\Undertilde}[1]{{\setbox1=\hbox{$#1$}\baselineskip=0pt\vtop{\hbox{$#1$}\hbox to\wd1{\hfil$\scriptstyle\sim$\hfil}}}{}}
\newcommand{\undertilde}[1]{{\setbox1=\hbox{$#1$}\baselineskip=0pt\vtop{\hbox{$#1$}\hbox to\wd1{\hfil$\scriptscriptstyle\sim$\hfil}}}{}}
\newcommand{\UnderdTilde}[1]{{\setbox1=\hbox{$#1$}\baselineskip=0pt\vtop{\hbox{$#1$}\hbox to\wd1{\hfil$\approx$\hfil}}}{}}
\newcommand{\Underdtilde}[1]{{\setbox1=\hbox{$#1$}\baselineskip=0pt\vtop{\hbox{$#1$}\hbox to\wd1{\hfil\scriptsize$\approx$\hfil}}}{}}
\renewcommand{\implies}{\mathrel{\rightarrow}}
\renewcommand{\iff}{\mathrel{\leftrightarrow}}
\def\<#1>{\left\langle#1\right\rangle}
\newcommand{\Ord}{\mathord{{\rm Ord}}}
\newcommand{\ETR}{{\rm ETR}}
\newcommand\ACA{{\rm ACA}}
\newcommand\RCA{{\rm RCA}}
\newcommand{\ATR}{{\rm ATR}}
\newcommand{\ZFC}{{\rm ZFC}}
\newcommand{\ZF}{{\rm ZF}}
\newcommand{\ZFCm}{\ZFC^-}
\newcommand{\ZFCmm}{\ZFC\text{\tt -}}
\newcommand{\KM}{{\rm KM}}
\newcommand{\GB}{{\rm GB}}
\newcommand{\GBC}{{\rm GBC}}
\newcommand{\DC}{{\rm DC}}
\newcommand{\RA}{{\rm RA}}
\newcommand{\PA}{{\rm PA}}
\newcommand{\cell}[1]{\boxit{\hbox to 17pt{\strut\hfil$#1$\hfil}}}
\newcommand{\head}[2]{\lower2pt\vbox{\hbox{\strut\footnotesize\it\hskip3pt#2}\boxit{\cell#1}}}
\newcommand{\boxit}[1]{\setbox4=\hbox{\kern2pt#1\kern2pt}\hbox{\vrule\vbox{\hrule\kern2pt\box4\kern2pt\hrule}\vrule}}
\newcommand{\Col}[3]{\hbox{\vbox{\baselineskip=0pt\parskip=0pt\cell#1\cell#2\cell#3}}}
\newcommand{\tapenames}{\raise 5pt\vbox to .7in{\hbox to .8in{\it\hfill input: \strut}\vfill\hbox to
.8in{\it\hfill scratch: \strut}\vfill\hbox to .8in{\it\hfill output: \strut}}}
\newcommand{\Head}[4]{\lower2pt\vbox{\hbox to25pt{\strut\footnotesize\it\hfill#4\hfill}\boxit{\Col#1#2#3}}}
\newcommand{\Dots}{\raise 5pt\vbox to .7in{\hbox{\ $\cdots$\strut}\vfill\hbox{\ $\cdots$\strut}\vfill\hbox{\
$\cdots$\strut}}}
\newcommand{\df}{\it} 
\newcommand\Tr{\text{Tr}}
\begin{document}

\begin{abstract}
 The principle of open determinacy for class games---two-player games of perfect information with plays of length $\omega$, where the moves are chosen from a possibly proper class, such as games on the ordinals---is not provable in Zermelo-Fraenkel set theory \ZFC\ or \Godel-Bernays set theory \GBC, if these theories are consistent, because provably in \ZFC\ there is a definable open proper class game with no definable winning strategy. In fact, the principle of open determinacy and even merely clopen determinacy for class games implies $\Con(\ZFC)$ and iterated instances $\Con^\alpha(\ZFC)$ and more, because it implies that there is a satisfaction class for first-order truth, and indeed a transfinite tower of truth predicates $\Tr_\alpha$ for iterated truth-about-truth, relative to any class parameter. This is perhaps explained, in light of the Tarskian recursive definition of truth, by the more general fact that the principle of clopen determinacy is exactly equivalent over \GBC\ to the principle of elementary transfinite recursion \ETR\ over well-founded class relations. Meanwhile, the principle of open determinacy for class games is provable in the stronger theory $\GBC+\Pi^1_1$-comprehension, a proper fragment of Kelley-Morse set theory \KM.
\end{abstract}

\maketitle

\section{Introduction}

\noindent The past half-century of set theory has revealed a robust connection between infinitary game theory and fundamental set-theoretic principles, such as the existence of certain large cardinals. The existence of strategies in infinite games has often turned out to have an unexpected set-theoretic power. In this article, we should like to exhibit another such connection in the case of games of proper class size, by proving that the principle of clopen determinacy for class games is exactly equivalent to the principle of elementary transfinite recursion \ETR\ along well-founded class relations. Since this principle implies $\Con(\ZFC)$ and iterated instances of $\Con^\alpha(\ZFC)$ and more, the principles of open determinacy and clopen determinacy both transcend \ZFC\ in consistency strength.

We consider two-player games of perfect information, where two players alternately play elements from an allowed space $X$ of possible moves, which in our case may be a proper class, such as the class of all ordinals $X=\Ord$. Together, the players build an infinite sequence $\vec\alpha=\langle \alpha_0,\alpha_1,\alpha_2,\ldots\rangle$ in $X^\omega$, which is the play resulting from this particular instance of the game. The winner is determined by consulting a fixed class of plays $A\of X^\omega$, possibly a proper class: if $\vec\alpha\in A$, then the first player has won this play of the game, and otherwise the second player has won. A strategy for a player is a (class) function $\sigma:X^{\ltomega}\to X$, which tells a player how to move next, given a finite position in the game. Such a strategy is winning for that player, if playing in accordance with the strategy leads to a winning play of the game, regardless of how the other player has moved. The game is determined, if one of the players has a winning strategy. We may formalize all talk of classes here in \Godel-Bernays \GBC\ set theory, or in \ZFC\ if one prefers to regard classes as definable from parameters.

The case of open games, generalizing the finite games, is an attractive special case, which for set-sized games has been useful in many arguments. Specifically, a game is {\df open} for a particular player, if for every winning play of the game for that player, there occurred during the course of play a finite position where the winning outcome was already ensured, in the sense that all plays extending that position are winning for that player. This is equivalent to saying that the winning condition set for that player is open in the product topology on $X^\omega$, where we put the discrete topology on $X$. Similarly, a game is {\df clopen}, if it is open for each player; these are the games for which every play of the game has a finite stage where the outcome is already known.

It is a remarkable elementary fact, the Gale-Stewart theorem~\cite{GaleStewart1953:InfiniteGamesWithPerfectInformation}, that in the context of set-sized games, every open game is determined. In order to discuss the problems that arise in the context of proper-class games, let us briefly sketch two classic proofs of open determinacy for set games. Suppose that we have a game that is open for one of the players, with an open winning condition $A\of X^\omega$ for that player, where $X$ is the set of possible moves.

For the first proof of open determinacy, suppose that the open player does not have a winning strategy in the game. So the initial position of the game is amongst the set of positions from which the open player does not have a winning strategy. The closed player may now simply play so as to stay in that collection, because if every move from a position $p$ leads to a winning position for the open player, then the open player can unify those strategies into a single winning strategy from position $p$. This way of playing is a winning strategy for the closed player, because no such position can be an already-won position for the open player. Therefore, the game is determined.

For a second proof, we use the elegant theory of ordinal game values. Namely, define that a position $p$ in the game has value $0$, if it is an already-won position for the open player, in the sense that every play extending $p$ is in $A$. A position $p$ with the open player to move has value $\alpha+1$, if $\alpha$ is minimal such that the open player can play to a position $p\concat x$ with value $\alpha$. A position $p$ with the closed player to play has a defined value only when every possible subsequent position $p\concat y$ already has a value, and in this case the value of $p$ is the supremum of those values. The key observation is that if a position has a value, then the open player can play so as to decrease the value, and the closed player cannot play so as to increase it or make it become undefined. Thus, by means of this value-reducing strategy, the open player can win from any position having a value, because the decreasing sequence of ordinals must eventually hit $0$; and the closed player can win from any position lacking an ordinal value, by maintaining the play on unvalued positions. So the game is determined, because the initial position either has a value or is unvalued. (The topic of ordinal game values appears widely in the literature, but for a particularly accessible discussion of the concrete meaning of small-ordinal game values, we refer the reader to~\cite{EvansHamkins2014:TransfiniteGameValuesInInfiniteChess, EvansHamkinsPerlmutter:APositionInInfiniteChessWithGameValueOmega^4}.)

Both of these proofs of open determinacy become problematic in \ZFC\ and also \GBC\ when $X$ is a proper class. The problem with the first proof is subtle, but note that winning strategies are proper class functions $\sigma:X^\ltomega\to X$, and so for a position $p$ to have a winning strategy for a particular player is a second-order property of that position. Thus, in order to pick out the class of positions for which the open player has a winning strategy, we would seem to need a second-order comprehension principle, which is not available in \ZFC\ or \GBC. The proof can, however, be carried out in $\GBC+\Pi^1_1$-comprehension, as explained in theorem~\ref{Theorem.KMImpliesOpenDeterminacy}. The problem with the ordinal-game-value proof in the proper class context is a little more clear, since in the inductive definition of game values, one takes a supremum of the values of $p\concat x$ for all $x\in X$, but if $X$ is a proper class, this supremum could exceed every ordinal. It would seem that we would be pushed to consider meta-ordinal game values larger than $\Ord$. So the proofs just don't seem to work in \ZFC\ or \GBC.

What we should like to do in this article is to consider more seriously the case where $X$ is a proper class. In this case, as we mentioned, the strategies $\sigma:X^{\ltomega}\to X$ will also be proper classes, and the winning condition $A\of X^\omega$ may also be a proper class.

\begin{question}\label{Question.CanWeProveOpenClassDeterminacy?}
 Can we prove open determinacy for class games?
\end{question}

\noindent For example, does every definable open class game in \ZFC\ admit a definable winning strategy for one of the players? In \GBC, must every open class game have a winning strategy? We shall prove that the answers to both of these questions is no. Our main results are the following.

\begin{maintheorems*}\
 \begin{enumerate}
   \item In \ZFC, there is a first-order definable clopen proper-class game with no definable winning strategy for either player.
   \item In \GBC, the existence of a winning strategy for one of the players in the game of statement (1) is equivalent to the existence of a satisfaction class for first-order set-theoretic truth.
   \item Consequently, the principle of clopen determinacy for class games in \GBC\ implies $\Con(\ZFC)$ and iterated consistency assertions $\Con^\alpha(\ZFC)$ and more.
   \item Indeed, the principle of clopen determinacy for class games is equivalent over \GBC\ to the principle \ETR\ of elementary transfinite recursion, which is a strictly weaker theory (assuming consistency) than $\GBC+\Pi^1_1$-comprehension, which is strictly weaker than Kelley-Morse \KM\ set theory.
   \item Meanwhile, open determinacy for class games is provable in $\GBC+\Pi^1_1$-comprehension.
 \end{enumerate}
\end{maintheorems*}

These claims will be proved in theorems~\ref{Theorem.DefinableClopenGameNoDefinableWS},~\ref{Theorem.ClopenDeterminacyGivesTruthPredicate},~\ref{Theorem.ClopenDetIffETR} and~\ref{Theorem.KMImpliesOpenDeterminacy}. Note that because \GBC\ includes the global choice principle, every proper class $X$ is bijective with the class of all ordinals $\Ord$, and so in \GBC\ one may view every class game as a game on the ordinals. We shall also prove other theorems that place \ETR\ and hence clopen determinacy into their setting in second-order set theory beyond \GBC.

\section{A broader context}

Let us place the results of this article in a broader context. The need to consider clopen determinacy for class games has arisen in recent developments in large cardinal set theory and forcing axioms, which sparked our interest. Specifically, Audrito and Viale~\cite{AudritoViale:AbsolutenessViaReflection, Audrito2016:Generic-large-cardinals-and-absoluteness}, generalizing the uplifting cardinals and resurrection axioms of~\cite{HamkinsJohnstone2014:ResurrectionAxiomsAndUpliftingCardinals, HamkinsJohnstone:StronglyUpliftingCardinalsAndBoldfaceResurrection}, introduced the $(\alpha)$-uplifting cardinals, the $\text{HJ}(\alpha)$-uplifting cardinals, and the iterated resurrection axioms $\RA_\alpha(\Gamma)$, all of which are defined in terms of winning strategies in certain proper-class-sized clopen games. Audrito and Viale had at first formalized their concepts in Kelley-Morse set theory, which is able to prove the requisite determinacy principles. The main results of this paper identify the minimal extensions of \Godel-Bernays set theory able satisfactorily to treat these new large cardinals and forcing axioms.

After having proved our theorems, however, we noticed the connection with analogous results in second-order arithmetic, where there has been a vigorous investigation of the strength of determinacy over very weak theories. Let us briefly survey some of that work. First, there is a natural affinity between our theorem~\ref{Theorem.ClopenDetIffETR}, which shows that clopen determinacy is exactly equivalent over \GBC\ to the principle of elementary transfinite recursion \ETR\ over well-founded class relations, with the 1977 dissertation result of Steel (see~\cite[Thm~V.8.7]{Simpson2009:SubsystemsOfSecondOrderArithmetic}), showing that clopen determinacy for games on the natural numbers is exactly equivalent in reverse mathematics to the theory of arithmetical transfinite recursion $\text{ATR}_0$. Simpson reported in conversation with the second author that Steel's theorem had had a strong influence on the beginnings of the reverse mathematics program. In both the set and class contexts, we have equivalence of clopen determinacy with a principle of first-order transfinite recursion. In the case of games on the natural numbers, however, Steel proved that $\ATR_0$ is also equivalent with open determinacy, and not merely clopen determinacy, whereas the corresponding situation of open determinacy for class games is not yet completely settled; the best current upper bound provided by theorem~\ref{Theorem.KMImpliesOpenDeterminacy}.

After Steel, the reverse mathematics program proceeded to consider the strength of determinacy for games having higher levels of complexity. Tanaka~\cite{Tanaka1990:WeakAxiomsOfDeterminacyAndSubsystemsOfAnalysisI} established the equivalence of $\Pi^1_1$-comprehension and $\Sigma^0_1\wedge \Pi^0_1$-determinacy, as well as the equivalence of $\Pi^1_1$-transfinite recursion and $\Delta^0_2$-determinacy, both over $\RCA_0$. The subsequent paper~\cite{Tanaka1991:WeakAxiomsOfDeterminacyAndSubsystemsOfAnalysisII} showed that $\Sigma^0_2$-determinacy is equivalent over $\RCA_0$ to a less familiar second-order axiom $\Sigma^1_1$-${\rm MI}$, known as the axiom of $\Sigma^1_1$-\emph{monotone inductive definition}\footnote{A function $\Gamma:P(\omega)\to P(\omega)$ is called a monotone operator (over $\omega$) if whenever $X\subseteq Y$, then $\Gamma(X)\subseteq \Gamma(Y)$. The axiom of $\Sigma^1_1$-\emph{monotone inductive definition} asserts that for every $\Sigma_1^1$-monotone operator $\Gamma$ (meaning $\{(x,X)\mid x\in \Gamma(X)\}$ is $\Sigma_1^1$), there exists a sequence $\< \Gamma_\alpha\mid\alpha\leq\sigma>$ for some ordinal $\sigma$ such that $\Gamma_\alpha=\Gamma(\Union_{\beta<\alpha}\Gamma_\beta)$ for all $\alpha\leq\sigma$ and such that $\Gamma_\sigma=\Union_{\alpha<\sigma}\Gamma_\alpha$, so that $\Gamma_\sigma$ is a fixed point of the operator $\Gamma$.}. MedSalem and Tanaka~\cite{MedSalemTanaka2007:Delta03DeterminacyComprehensionAndInduction} considered $\Delta^0_3$-determinacy, proving it in $\Delta^1_3$-comprehension plus $\Sigma^1_3$-induction, and showing that it does not follow from $\Delta^1_3$-comprehension alone. MedSalem and Tanaka~\cite{MedSalemTanaka2008:WeakDeterminacyAndIterationsOfInductiveDefinitions} settled the exact strength of $\Delta^0_3$-determinacy over the theory $\RCA_0+\Pi^1_3$-transfinite induction by introducing a new axiom for iterating $\Sigma^1_1$-inductive definitions. Philip Welch~\cite{Welch2011:WeakSystemsOfDeterminacyAndArithmeticalQuasiInductiveDefinitions} characterized the ordinal stage by which the strategies for $\Sigma^0_3$ games appear in the constructible hierarchy, continuing the program initiated by Blass~\cite{Blass1972:ComplexityOfWinningStrategies}, who showed that every computable game has its strategy appearing before the next admissible ordinal. He also shows that $\Pi^1_3$-comprehension proves not just $\Pi^0_3$-determinacy, but that there is a $\beta$-model of $\Pi^0_3$-determinacy. Montalb{\'a}n and Shore~\cite{MontalbanShore2012:The-limits-of-determinacy-in-second-order-arithmetic} established a precise bound for the amount of determinacy provable in full second-order arithmetic ${\rm Z}_2$. They showed that for each fixed $n$, $\Pi^1_{n+2}$-comprehension proves determinacy for $n$-length Boolean combinations of $\Pi^0_4$-formulas, but ${\rm Z}_2$ cannot prove $\Delta^0_4$-determinacy.

Although there is a clear analogy between our theorems concerning clopen determinacy for proper-class games and the analysis of clopen determinacy on the natural numbers, nevertheless, one should not naively expect a tight connection between the determinacy of $\Sigma^0_n$ definable games in second-order arithmetic, say, with that of $\Sigma^0_n$ definable class games in the \Levy\ hierarchy. The reason is that determinacy for first-order definable $\Sigma^0_n$ sets of reals in the arithmetic hierarchy is provable in \ZFC, and with sufficient large cardinals, determinacy runs through the second-order projective hierarchy $\Sigma^1_n$ as well, but determinacy for first-order definable games in set theory is simply refutable in \ZF\ already at the level of $\Delta^0_2$ in the \Levy\ hierarchy, in light of theorem~\ref{Theorem.DefinableNondeterminedGame}. Rather, one should expect a connection between the analysis of $\Sigma^0_n$ determinacy in arithmetic and the corresponding level of the proper-class analogue of the Borel hierarchy for subclasses of $\Ord^\omega$, which we discuss in section~\ref{Section.Questions}. There are several fundamental disanalogies for determinacy in second-order arithmetic in comparison with second-order set theory that lead us to expect differences in the resulting theory, among them the facts that (i) $\Ord^\omega$ is not separable in the product topology whereas Baire space $\omega^\omega$ is separable; (ii) wellfoundedness for class relations is first-order expressible in set theory, whereas it is $\Pi^1_1$-complete in arithmetic; and finally, (iii) individual plays of a game on $\Ord$ are first-order objects in set theory, making the payoff collection a class, whereas in arithmetic a play of a game is already a second-order object and the payoff collection is a third-order object.

\section{The truth-telling game}

Let us now prove the initial claims of the main theorem.

\begin{theorem}\label{Theorem.DefinableClopenGameNoDefinableWS}
 In \ZFC, there is a particular definable clopen proper-class game, for which no definition and parameter defines a winning strategy for either player.
\end{theorem}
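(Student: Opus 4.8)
The plan is to exhibit a concrete clopen class game—the \emph{truth-telling game}—in which one player (call her the Challenger) interrogates the other (the Truth-Teller) about the first-order satisfaction relation of the universe, and then to argue that a winning strategy for either player would have to encode a satisfaction class, which provably cannot be definable. First I would describe the game: the Truth-Teller is asked to commit, over the course of $\omega$ many rounds, to truth values for first-order assertions $\varphi(\vec a)$ in the language of set theory with set parameters, where at each round the Challenger picks a formula-with-parameters and possibly a witness to probe, and the Truth-Teller must answer ``true'' or ``false'' and, in the case of an existential claim she called true, must on demand supply a witness. The Challenger wins if at some finite stage the Truth-Teller's commitments become visibly incoherent, i.e.\ violate one of the Tarskian recursion clauses (a Boolean combination rule, a quantifier rule, or the atomic rule), or if she declared $\exists x\,\psi(x,\vec a)$ true but the later-demanded witness $b$ gives $\psi(b,\vec a)$ the value false. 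Since any failure of coherence is witnessed at a finite stage, this is an \emph{open} game for the Challenger; and since the Truth-Teller, to survive, need only avoid a finite-stage loss—so that her complementary payoff is also decided at finite stages—the game is in fact \emph{clopen}. One must be slightly careful to set up the bookkeeping so that the Challenger is given the means to revisit every formula and every instantiation (e.g.\ have the Challenger's moves range over $\Ord\times$ (codes), so the game is genuinely a proper-class game on the ordinals), ensuring that a Truth-Teller strategy that never loses really does decide every assertion coherently.

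Next I would prove the two halves of the equivalence-flavored claim that underlies the theorem. If the \emph{Truth-Teller} has a winning strategy $\tau$, then by playing the Challenger so as to enumerate all formulas and all parameters and all witness demands—a definable-from-$\tau$ way of playing, since the Challenger's moves are chosen by a fixed schedule—one reads off from the resulting runs a class $S$ of pairs $(\gcode{\varphi},\vec a)$ that $\tau$ commits to ``true''; the fact that $\tau$ never loses means exactly that $S$ satisfies the Tarski recursion, so $S$ is a (full) satisfaction class for first-order truth over $V$. Conversely, if the \emph{Challenger} has a winning strategy, I would argue that the Truth-Teller nonetheless has an obvious ``honest'' way to respond given any satisfaction class $S$—answer according to $S$ and supply $S$-witnesses—and this never loses; so a Challenger winning strategy, played against the honest Truth-Teller, yields a contradiction \emph{unless} no satisfaction class exists, which is not the relevant direction. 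The cleaner formulation, and the one I would actually carry out here, is: if either player has a winning strategy, then there is a satisfaction class. For the Challenger side the trick is that a winning strategy for the Challenger would defeat every Truth-Teller play, in particular it would have to defeat the honest play derived from a putative satisfaction class—so, assuming a winning strategy for the Challenger, one derives that there is \emph{no} satisfaction class, but then the Challenger cannot actually defeat an arbitrary Truth-Teller, because a Truth-Teller can always locally extend a coherent finite partial commitment (coherence of finite fragments is just first-order consistency of a finite diagram, available in $V$), a contradiction; hence the Challenger has no winning strategy at all. So in \ZFC\ the game is \emph{not} determined by any definable strategy, and more: a definable winning strategy for the Truth-Teller would produce a definable satisfaction class.

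The final step invokes the classical Tarski undefinability theorem: there is no $\Sigma_n$-definable (with parameters) class $S$ that is a satisfaction class for all first-order formulas, since by diagonalization such an $S$ would in particular correctly decide its own defining formula on all parameters, contradicting Tarski. Therefore no definition-and-parameter yields a winning strategy for the Truth-Teller. For the Challenger, the argument above already shows outright (in \ZFC) that there is no winning strategy, definable or otherwise, because any finite coherent partial play extends to one more round. Combining, no definable strategy wins the game for either player, which is the assertion of the theorem.

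I expect the main obstacle to be the careful design of the game's rulebook so that (i) it is genuinely clopen—every play's outcome settled at a finite stage, including the subtle point that the Challenger must be structurally guaranteed the ability to eventually demand a witness for any previously-asserted existential, so that a surviving Truth-Teller really has committed to a \emph{total} and \emph{coherent} truth assignment, not merely one that dodges the finitely many questions actually asked; and (ii) the map from a surviving Truth-Teller play to a satisfaction class is definable from the strategy. Handling existential witnesses correctly—making sure the Challenger has, within the length-$\omega$ play and the allowed move space, enough ``bandwidth'' to interleave probing all formulas with following up on all witness demands—is the delicate combinatorial point; once the bookkeeping is right, the equivalence with satisfaction classes and the appeal to Tarski are routine.
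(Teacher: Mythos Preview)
Your proposal contains a genuine gap: the truth-telling game as you describe it is \emph{not} clopen, only open for the Challenger. You write that ``the Truth-Teller, to survive, need only avoid a finite-stage loss---so that her complementary payoff is also decided at finite stages,'' but this reasoning is mistaken. The Truth-Teller wins precisely by surviving \emph{all} $\omega$ rounds without incoherence; at no finite stage is her victory secured, since at any finite position the play could continue in a way that produces a Tarskian violation. Her winning condition is thus closed but not open. The paper makes exactly this point and repairs it by passing to a \emph{counting-down} variant, in which the interrogator must also announce on the first move a natural number $n$ (or a decreasing sequence of ordinals) bounding the total length of play; the winner is then determined by stage $n$, making the game genuinely clopen. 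One must then re-prove that a winning strategy for the truth-teller still yields a satisfaction class, which requires arguing that the answers given by the strategy stabilize for plays with sufficiently many moves remaining---an induction on formula complexity that you have not carried out and that does not follow from your single-enumeration argument.

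A secondary issue concerns your argument that the Challenger has no winning strategy. You claim that ``a Truth-Teller can always locally extend a coherent finite partial commitment (coherence of finite fragments is just first-order consistency of a finite diagram, available in $V$),'' but this is too vague: the truth-teller must be correct on atomic formulas about $V$ and must supply actual witnesses in $V$ for existentials she has declared true, so abstract finite consistency is not enough. The paper instead uses reflection: given a putative strategy $\sigma$ for the interrogator, choose $\theta$ with $V_\theta$ closed under $\sigma$, and let the truth-teller answer according to the (set-sized, hence available) truth predicate of $\langle V_\theta,\in\rangle$. This defeats $\sigma$ cleanly, and the argument goes through unchanged for the counting-down variant since that game is only harder for the interrogator.
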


The proof therefore provides in \ZFC\ a completely uniform counterexample to clopen determinacy, with respect to definable strategies, because the particular game we shall define has no definable winning strategy for either player in any model of set theory. Theorem~\ref{Theorem.DefinableClopenGameNoDefinableWS} is a theorem scheme, ranging over the possible definitions of the putative winning strategy. We shall prove the theorem as a consequence of the following stronger and more revealing result.

\begin{theorem}\label{Theorem.ClopenDeterminacyGivesTruthPredicate}
 There is a particular first-order definable clopen game, whose determinacy is equivalent in \GBC\ to the existence of a satisfaction class for first-order set-theoretic truth. Consequently, in \GBC\ the principle of clopen determinacy for class games implies $\Con(\ZFC)$, as well as iterated consistency assertions $\Con^\alpha(\ZFC)$ and much more.
\end{theorem}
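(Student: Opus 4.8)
The plan is to build an explicit two-player \emph{truth-telling game} $G$, pitting an \emph{interrogator} against a \emph{truth-teller}, in which the interrogator challenges the truth-teller to behave like a satisfaction class. In $G$, the interrogator opens by naming a pair $(\varphi_0,\vec a_0)$ consisting of a first-order $\in$-formula together with an assignment of set parameters to its free variables; the truth-teller replies with a truth value for $\varphi_0(\vec a_0)$, supplemented by a witness when $\varphi_0$ is existential and the declared value is ``true'', and by a nominated false conjunct when $\varphi_0$ is a conjunction and the declared value is ``false''. If $\varphi_0$ is atomic the game ends, the truth-teller winning exactly if the declared value agrees with the truth of $\varphi_0(\vec a_0)$, which is directly first-order expressible; otherwise the interrogator \emph{drills down} to an immediate subformula, choosing among the options sanctioned by the shape and declared value of $\varphi_0$ --- a conjunct of a conjunction, the substitution of the truth-teller's own witness into a true existential or of an arbitrary parameter into a false existential, or the negatum of a negation --- and the truth-teller must declare the resulting formula, now obliged by the Tarskian recursion to give the forced value (losing at once otherwise). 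Since each drill step passes to a syntactically strictly shorter formula, every play reaches a decisive atomic position after finitely many moves, so $G$ is clopen; and $G$ is visibly first-order definable without parameters.

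For the reverse implication, given a satisfaction class $S$ the truth-teller simply tells the truth according to $S$: declare $(\varphi,\vec a)$ ``true'' iff $(\varphi,\vec a)\in S$, and choose the required witnesses and nominated conjuncts using the $\exists$- and $\wedge$-clauses of $S$ together with the global choice function. The clauses of $S$ guarantee that no inconsistency is ever exposed and that the terminal atomic declaration is correct, so this is a winning strategy and $G$ is determined. Conversely, the interrogator can never win: against any interrogator strategy, the play in which the truth-teller answers with the correct truth value --- and a correct witness or nominated conjunct whenever one is demanded, which exists by the very truth of the assertion at hand --- is won by the truth-teller. So determinacy of $G$ amounts to the truth-teller's having a winning strategy.

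The substantive direction recovers a satisfaction class from such a strategy. Given a winning strategy $\tau$ for the truth-teller, let $S$ be the class of pairs $(\varphi,\vec a)$ to which $\tau$ responds ``true'' when the interrogator opens with $(\varphi,\vec a)$; this is a class since $\tau$ is. The key lemma, proved by induction on the syntactic complexity $n$ of $\varphi$, is that at \emph{every} position reachable in a $\tau$-conforming play at which the truth-teller must declare a formula of complexity at most $n$, the declared value is the genuinely correct one. The inductive step is a case analysis on the leading connective: were the truth-teller to declare incorrectly, the interrogator could drill into an immediate subformula --- using the truth-teller's own witness in the true-existential case, its nominated conjunct in the false-conjunction case, and an arbitrary parameter in the false-existential case --- whereupon the truth-teller, forced by the rules of $G$ to declare that shorter formula in the Tarski-dictated way, either makes a declaration that is again incorrect, contradicting the induction hypothesis, or loses outright, contradicting that $\tau$ is winning. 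The crucial point --- and the reason the game places the witness-and-nomination burdens on the truth-teller rather than letting the interrogator query several formulas at once --- is that the interrogator never needs to \emph{find} a witness, so the argument is uniform in $n$ and is a legitimate induction inside \GBC. Applying the lemma to opening positions identifies $S$ with the class of all true formula-assignment pairs, and the Tarskian clauses for $S$ are then immediate, so $S$ is a satisfaction class. This final identification, and hence the key lemma supporting it, is the main obstacle.

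For the stated consequences, the principle of clopen determinacy for class games applies in particular to $G$, hence yields a satisfaction class $S$. A standard argument inside \GBC\ then shows that $S$ declares every axiom --- and, the clauses of $S$ respecting \emph{modus ponens} and the other rules of inference, every theorem --- of \ZFC\ to be true, while $S$ declares $0=1$ false; so no contradiction is provable from \ZFC, that is, $\Con(\ZFC)$ holds. Relativizing $G$ to an arbitrary class parameter $A$, by admitting the further atomic assertions $x\in A$, gives for each $A$ a first-order (in $A$) definable clopen game whose determinacy yields a satisfaction class for $(V,\in,A)$; iterating this from $S$ and invoking the equivalence of clopen determinacy with elementary transfinite recursion from Theorem~\ref{Theorem.ClopenDetIffETR}, one obtains the transfinite tower of iterated truth predicates $\Tr_\alpha$, and thereby the iterated consistency assertions $\Con^\alpha(\ZFC)$ and much more.
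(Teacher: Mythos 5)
Your game design is genuinely different from the paper's and is attractive: by forcing the interrogator to drill down through subformulas of a single opening formula, the game terminates by the stage bounded by the complexity of that formula and is clopen outright, so you avoid the paper's device of adding a count-down clock to an open truth-telling game. But the two pivotal arguments in your write-up both appeal to a global notion of first-order truth in $V$, which is precisely what Tarski's theorem says is not available as a class in \GBC---its existence is the very thing being characterized. Concretely, your argument that the interrogator can never win has the truth-teller ``answer with the correct truth value''; against an arbitrary interrogator strategy (a class, whose opening formula may be a nonstandard formula of an $\omega$-nonstandard model of \GBC) there is no internal ``correct value'' to consult, so this is circular. The repair is the paper's reflection argument, which transfers verbatim to your game: given an interrogator strategy $\sigma$, choose $\theta$ with $V_\theta$ closed under $\sigma$ and have the truth-teller answer according to the satisfaction relation of the set structure $\langle V_\theta,{\in}\rangle$ (which exists even for nonstandard formulas), choosing witnesses in $V_\theta$; atomic truth in $V_\theta$ agrees with $V$, so this play defeats $\sigma$.

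The same defect undermines your key lemma. The statement ``every declaration made along a $\tau$-conforming play is genuinely correct'' cannot serve as the induction hypothesis of an induction carried out inside \GBC: it refers to a truth predicate that does not yet exist, and for formulas of nonstandard complexity it is not even meaningful; read metatheoretically it is only a scheme about standard formulas, which does not yield the class satisfaction predicate the theorem demands (and which the $\Con(\ZFC)$ consequences require---note also that ``$S$ declares every \ZFC\ axiom true'' needs the separate argument about nonstandard instances of the schemes, via collection in the language with $S$, which you pass over as standard). What one must prove instead, as the paper does, is that $\tau$'s declarations are \emph{independent of the play} in which a formula arises; this coherence statement is first-order in the class parameter $\tau$, so induction on formula complexity (including nonstandard complexities) is legitimate in \GBC. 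In your game this is exactly the missing content: nothing within a single play penalizes a mismatch between the value forced for $\psi(b,\vec a)$ when it is reached by drilling from $\exists x\,\psi(x,\vec a)$ and the value $\tau$ announces when the game opens directly with $\psi(b,\vec a)$, so cross-play coherence must be established before your class $S$ of opening answers can be shown to satisfy the Tarskian clauses. With those two repairs---reflection for the interrogator lemma, and coherence in place of ``genuine correctness''---your drill-down game does prove the theorem, and the final paragraph on $\Con(\ZFC)$, relativization to class parameters, and iterated truth predicates then goes through as in the paper.
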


\begin{proof}
To begin, we introduce the \emph{truth-telling} game, which will be a definable open game with no definable winning strategy. The truth-telling game has two players, the {\df interrogator} and the {\df truth-teller}, who we may imagine play out the game in a court of law, with the truth-teller in the witness box answering tricky pointed questions posed by the opposing counsel, in the style of a similar game described by Adrian Mathias~\cite{Mathias2015:InLodeDellalogica} in the context of extensions of \PA\ in arithmetic. On each turn, the interrogator puts an inquiry to the the truth-teller concerning the truth of a particular first-order set-theoretic formula $\varphi(\vec a)$ with parameters. The truth-teller must reply to the inquiry by making a truth pronouncement either that it is \emph{true} or that it is \emph{false}, not necessarily truthfully, and in the case that the formula $\varphi$ is an existential assertion $\exists x\,\psi(x,\vec a)$ declared to be true, then the truth teller must additionally identify a particular witness $b$ and pronounce also that $\psi(b,\vec a)$ is \emph{true}. So a play of the game consists of a sequence of such inquiries and truth pronouncements.

The truth-teller need not necessarily answer truthfully to win! Rather, the truth-teller wins a play of the game, provided merely that she does not violate the recursive Tarskian truth conditions during the course of play. What we mean, first, is that when faced with an atomic formula, she must pronounce it true or false in accordance with the actual truth or falsity of that atomic formula; similarly, she must pronounce that $\varphi\wedge\psi$ is true just in case she pronounces both $\varphi$ and $\psi$ separately to be true, if those inquiries had been issued by the interrogator during play; she must pronounce opposite truth values for $\varphi$ and $\neg\varphi$, if both are inquired about; and she must pronounce $\exists x\,\varphi(x,\vec a)$ to be true if and only if she ever pronounces $\varphi(b,\vec a)$ to be true of any particular $b$ (the forward implication of this is already ensured by the extra pronouncement in the existential case of the game). This is an open game for the interrogator, because if the truth-teller ever should violate the Tarskian conditions, then this violation will be revealed at finite stage of play, and this is the only way for the interrogator to win.

We remind the reader that a {\df satisfaction class} or {\df truth predicate} for first-order truth is a class $\Tr$ of pairs $\<\varphi,\vec a>$ consisting of a formula $\varphi$ and a list of parameters $\vec a$ assigned to the free variables of that formula, which obeys the Tarskian recursive definition of truth (for simplicity we shall write the pair simply as $\varphi(\vec a)$, suppressing the variable assignment, but keep in mind that these are mentions of formulas rather than uses). So in the atomic case, we'll have $(a=b)\in\Tr$ if and only if $a=b$, and $(a\in b)\in\Tr$ if and only if $a\in b$; for negation, $\neg\varphi(\vec a)\in\Tr$ if and only if $\varphi(\vec a)\notin\Tr$; for conjunction, $(\varphi\wedge\psi)(\vec a)\in\Tr$ if and only if $\varphi(\vec a)\in\Tr$ and $\psi(\vec a)\in\Tr$; and for quantifiers, $\exists x\,\varphi(x,\vec a)\in\Tr$ just in case there is $b$ for which $\varphi(b,\vec a)\in \Tr$. Tarski proved that in any sufficiently strong first-order theory no such truth predicate for first-order truth is definable in the same language. Meanwhile, in the second-order Kelley-Morse set theory \KM\ and even in the weaker theory \GBC\ plus the principle of transfinite recursion over well-founded class relations, we can define a truth predicate for first-order truth, simply because the Tarskian recursion itself is a well-founded recursion on the complexity of the formulas, where we define the truth of $\varphi(\vec a)$ in terms of $\psi(\vec b)$ for simpler formulas $\psi$.

\begin{sublemma}\label{Lemma.TruthTellerStrategyIffSatisfactionClass}
 The truth-teller has a winning strategy in the truth-telling game if and only if there is a satisfaction class for first-order truth.
\end{sublemma}

\begin{proof}
We may understand this lemma as formalized in \Godel-Bernays \GBC\ set theory, which includes the global choice principle. Clearly, if there is a satisfaction class for first-order truth, then the truth-teller has a winning strategy, which is simply to answer all questions about truth in accordance with that satisfaction class, using the global choice principle to pick Skolem witnesses in the existential case. Since by definition that class obeys the Tarskian conditions, she will win the game, no matter which challenges are issued by the interrogator.

Conversely, suppose that the truth-teller has a winning strategy $\tau$ in the game. We shall use $\tau$ to build a satisfaction class for first-order truth. Specifically, let $\Tr$ be the collection of formulas $\varphi(\vec a)$ that are pronounced true by $\tau$ in any play according to $\tau$, including the supplemental truth pronouncements made in the existential case about the particular witnesses. We claim that $\Tr$ is a satisfaction class. Since the truth-teller was required to answer truthfully to all inquiries about atomic formulas, it follows that $\Tr$ contains all and only the truthful atomic assertions. In particular, the answers provided by the strategy $\tau$ on inquiries about atomic formulas are independent of the particular challenges issued by the interrogator and of the order in which they are issued. Next, we generalize this to all formulas, arguing by induction on formulas that the truth pronouncements made by $\tau$ on a formula is always independent of the play in which that formula arises. We have already noticed this for atomic formulas. In the case of negation, if inductively all plays in which $\varphi(\vec a)$ is issued as a challenge or arises as a witness case come out true, then all plays in which $\neg\varphi(\vec a)$ arises will result in false, or else we could create a play in which $\tau$ would violate the Tarskian truth conditions, simply by asking about $\varphi(\vec a)$ after $\neg\varphi(\vec a)$ was answered affirmatively. Similarly, if $\varphi$ and $\psi$ always come out the same way, then so must $\varphi\wedge\psi$. We don't claim that $\tau$ must always issue the same witness $b$ for an existential $\exists x\,\psi(x,\vec a)$, but if the strategy ever directs the truth-teller to pronounce this statement to be true, then it will provide some witness $b$ and pronounce $\psi(b,\vec a)$ to be true, and by induction this truth pronouncement for $\psi(b,\vec a)$ is independent of the play on which it arises, forcing $\exists x\,\varphi(x,\vec a)$ to always be pronounced true. Thus, by induction on formulas, the truth pronouncements made by the truth-teller strategy $\tau$ allow us to define from $\tau$ a satisfaction class for first-order truth.
\end{proof}

It follows by Tarski's theorem on the non-definability of truth that there can be no definable winning strategy for the truth-teller in this game, because there can be no definable satisfaction class.

\begin{sublemma}\label{Lemma.InterrogatorHasNoWinningStrategy}
 The interrogator has no winning strategy in the truth-telling game.
\end{sublemma}

\begin{proof}
Suppose that $\sigma$ is a strategy for the interrogator. So $\sigma$ is a proper class function that directs the interrogator to issue certain challenges, given the finite sequence of previous challenges and truth-telling answers. By the reflection theorem, there is a closed unbounded proper class of cardinals $\theta$, such that $\sigma\image V_\theta\of V_\theta$. That is, $V_\theta$ is closed under $\sigma$, in the sense that if all previous challenges and responses come from $V_\theta$, then the next challenge will also come from $V_\theta$. Since $\langle V_\theta,{\in}\rangle$ is a set, we have a truth predicate on it, as well as a Skolem function selecting existential witnesses. Consider the play, where the truth-teller replies to all inquiries by consulting truth in $V_\theta$, rather than truth in $V$, and using the Skolem function to provide the witnesses in the existential case. The point is that if the interrogator follows $\sigma$, then all the inquiries will involve only parameters $\vec a$ in $V_\theta$, provided that the truth-teller also always gives witnesses in $V_\theta$, which in this particular play will be the case. Since the truth predicate on $V_\theta$ does satisfy the Tarskian truth conditions, it follows that the truth-teller will win this instance of the game, and so $\sigma$ is not a winning strategy for the interrogator.
\end{proof}

Thus, if open determinacy holds for classes, then there is a truth predicate $\Tr$ for first-order truth. But we have not yet quite proved the theorem, because the truth-telling game is an open game, rather than a clopen game, whereas the theorem concerns determinacy for clopen games. The truth-teller wins the truth-telling game only by playing the game out for infinitely many steps, and this is not an open winning condition for her, since at any point the play could have continued in such a way so as to produce a loss for the truth-teller, if the players cooperated in order to achieve that.

So let us describe a modified game, the {\df counting-down truth-telling game}, which will be clopen and which we may use in order to prove the theorem. Specifically, the counting-down truth-telling game is just like the truth-telling game, except that we insist that the interrogator must also state on each move a specific ordinal $\alpha_n$, which descend during play $\alpha_0>\alpha_1>\cdots>\alpha_n$. If the interrogator gets to $0$, then the truth-teller is declared the winner. For this modified game, the winner will be known in finitely many moves, because either the truth-teller will violate the Tarskian conditions or the interrogator will hit zero. So this is a clopen game. Since the counting-down version of the game is harder for the interrogator, it follows that the interrogator still can have no winning strategy. We modify the proof of lemma~\ref{Lemma.TruthTellerStrategyIffSatisfactionClass} for this game by claiming that if $\tau$ is a winning strategy for the truth-teller in the counting-down truth-telling game, then the truth pronouncements made by $\tau$ in response to all plays \emph{with sufficiently large ordinals} all agree with one another independently of the interrogator's play. The inductive argument of lemma~\ref{Lemma.TruthTellerStrategyIffSatisfactionClass} still works under the assumption that the counting-down ordinal is sufficiently large, because there will be enough time to reduce a problematic case. The ordinal will depend only on the formula and not on the parameter. For example, if $\varphi(\vec a)$ always gets the same truth pronouncement for plays in which it arises with sufficiently large ordinals, then so also does $\neg\varphi(\vec a)$, with a slightly larger ordinal, because in a play with the wrong value for $\neg\varphi(\vec a)$ we may direct the interrogator to inquire next about $\varphi(\vec a)$ and get a violation of the Tarskian recursion. Similar reasoning works in the other cases, and so we may define a satisfaction class from a strategy in the modified game. Since that game is clopen, we have proved that clopen determinacy for class games implies the existence of a satisfaction class for first-order truth.

We complete the proof of theorem~\ref{Theorem.ClopenDeterminacyGivesTruthPredicate} by explaining how the existence of a satisfaction class implies $\Con(\ZFC)$ and more. Working in \Godel-Bernays set theory, we may apply the reflection theorem to the class $\Tr$ and thereby find a proper class club $C$ of cardinals $\theta$ for which $\<V_\theta,{\in},\Tr\cap  V_\theta>\elesub_{\Sigma_1}\<V,{\in},\Tr>$. In particular, this implies that $\Tr\intersect V_\theta$ is a satisfaction class on $V_\theta$, which therefore agrees with truth in that structure, and so these models form a continuous elementary chain, whose union is the entire universe:
 $$V_{\theta_0}\elesub V_{\theta_1}\elesub\cdots\elesub V_\lambda\elesub\cdots\elesub V.$$
There is a subtle point here concerning $\omega$-nonstandard models, namely, in order to see that all instances of the \ZFC\ axioms are declared true by $\Tr$, it is inadequate merely to note that we have assumed \ZFC\ to be true in $V$, because this will give us only the standard-finite instances of those axioms in $\Tr$, but perhaps we have nonstandard natural numbers in $V$, beyond the natural numbers of our metatheory. Nevertheless, because in \GBC\ we have the collection axiom relative to the truth predicate itself, we may verify that all instances of the collection axiom (including nonstandard instances, if any) $$\forall b\,\forall z\left(\strut\forall x\in b\, \exists y\,\varphi(x,y,z)\implies\exists c\,\forall x\in b\,\exists y\in c\,\varphi(x,y,z)\right)$$
must be declared true by $\Tr$, because we may replace the assertion of $\varphi(x,y,z)$ with the assertion $\varphi(x,y,z)\in\Tr$, which reduces the instance of collection for the (possibly nonstandard) formula $\varphi$ to an instance of standard-finite collection in the language of $\Tr$, using the \Godel\ code of $\varphi$ as a parameter, thereby collecting sufficient witnesses $y$ into a set $c$. So even the nonstandard instances of the collection axiom must be declared true by $\Tr$. It follows that each of these models $V_\theta$ for $\theta\in C$ is a transitive model of \ZFC, understood in the object theory of $V$, and so we may deduce $\Con(\ZFC)$ and $\Con\left(\strut\ZFC+\Con(\ZFC)\right)$ and numerous iterated consistency statements of the form $\Con^\alpha(\ZFC)$, which must be true in all such transitive models for quite a long way. Alternatively, one can make a purely syntactic argument for $\Con(\ZFC)$ from a satisfaction class, using the fact that the satisfaction class is closed under deduction and does not assert contradictions.
\end{proof}

Note that in the truth-telling games, we didn't really need the interrogator to count down in the ordinals, since it would in fact have sufficed to have him count down merely in the natural numbers; the amount of time remaining required for the truth pronouncements to stabilize is essentially related to the syntactic complexity of $\varphi$. We could have insisted merely that on the first move, the interrogator announce a natural number $n$, and then the game ends after $n$ moves, with the interrogator winning only if the Tarski conditions are violated by the truth-teller within those moves.

\begin{proof}[Proof of theorem~\ref{Theorem.DefinableClopenGameNoDefinableWS}] We use the same game as in the proof of theorem~\ref{Theorem.ClopenDeterminacyGivesTruthPredicate}. Any definable winning strategy in the counting-down truth-telling game would provide a definable truth predicate, but by Tarski's theorem on the non-definability of truth, there is no such definable truth predicate. Thus, the counting-down truth-telling game is a first-order parameter-free definable clopen game in \ZFC, which can have no definable winning strategy (allowing parameters) for either player.
\end{proof}

It is interesting to observe that one may easily modify the truth-telling games by allowing a fixed class parameter $B$, so that clopen determinacy implies over \GBC\ that there is a satisfaction class relative to truth in $\langle V,{\in},B\rangle$. For example, we may get a truth predicate $\Tr_1$ for the structure $\<V,{\in},\Tr>$ itself, so that $\Tr_1$ concerns truth-about-truth. One may iterate this idea much further, to have predicates $\Tr_\alpha$ for every ordinal $\alpha$, which are truth predicates for the structure $\<V,{\in},\Tr_\beta>_{\beta<\alpha}$. Somewhat more uniformly, we may prefer a single binary predicate $\Tr\of\Ord\times V$, whose every slice $\Tr_\alpha=\set{x\mid \<\alpha,x>\in\Tr}$ is a truth predicate for the structure $\<V,{\in},\Tr\restrict(\alpha\times V)>$, and this is a more expressive treatment than having separate predicates, since one may now quantify over the earlier stages of truth. Indeed, one may hope to iterate truth predicates beyond $\Ord$ along any class well-order as in theorem~\ref{Theorem.ETR-iff-iterated-truth-predicate}.

Using the same ideas as in the proof of theorem~\ref{Theorem.ClopenDeterminacyGivesTruthPredicate}, one may formulate an \emph{iterated-truth-telling game}, where the truth-teller answers inquires about such iterated truth predicates, and then prove from clopen determinacy that there is indeed such an iterated truth predicate. This conclusion also follows immediately from theorem~\ref{Theorem.ClopenDetIffETR}, however, as the iterated truth predicate can be defined by an elementary transfinite recursion, and furthermore, the iterated-truth-telling game is fundamentally similar to the iteration game we use to prove theorem~\ref{Theorem.ClopenDetIffETR}. We shall therefore not give a separate proof for the iterated-truth case. Theorems~\ref{Theorem.ETR-iff-iterated-truth-predicate} and~\ref{Theorem.ClopenDetIffETR} show that clopen determinacy is equivalent over \GBC\ to the existence of iterated truth predicates over any class well-order.

Next, we briefly clarify the role of the global choice principle with the following well-known result.

\begin{theorem}[Folklore]\label{Theorem.ClopenDeterminacyImpliesAC}
 In \Godel-Bernays set theory \GB, the principle of clopen determinacy implies the global axiom of choice.
\end{theorem}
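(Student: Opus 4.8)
The plan is to devise a clopen class game whose winning strategies for one player directly encode a global well-ordering (or a global choice function), so that determinacy of that game yields the global axiom of choice. The natural target is a global choice function: a class function $F$ with $F(x)\in x$ for every nonempty set $x$. Since in \GB\ we have ordinary (set) choice available — indeed \GB\ includes \AC\ for sets — the only thing missing from global choice is the uniformity across all sets at once, which is exactly the sort of ``strategy knows how to respond to every position'' uniformity that a class strategy provides.

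First I would define the following game. On move $2n$ the first player (call him the challenger) plays an arbitrary nonempty set $x_n$; on move $2n+1$ the second player (the chooser) must respond with an element $y_n\in x_n$. The chooser loses immediately if she ever plays some $y_n\notin x_n$ or plays $y_n$ when $x_n\neq\emptyset$ fails to contain it; otherwise, after $\omega$ moves, the chooser is declared the winner. This is an open game for the challenger — the only way the challenger wins is if the chooser violates the membership requirement at some finite stage — so as stated it is open rather than clopen. To make it genuinely clopen, I would use the same device as in the counting-down truth-telling game above: have the challenger also announce on his first move a natural number $n$ (or a descending sequence of ordinals), after which the game halts, the chooser winning unless she has violated membership within the allotted finitely many rounds. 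Now the winner is known after finitely many moves, so the game is clopen.

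Next I would observe that the challenger has no winning strategy: given any strategy $\sigma$ for the challenger, at each stage the challenger (following $\sigma$) presents some nonempty set $x_n$, and the chooser can simply pick any element of $x_n$ — such an element exists by \AC\ for sets, or even just because $x_n\neq\emptyset$ — so the chooser never violates the membership requirement and wins. Hence by clopen determinacy the chooser has a winning strategy $\tau$. From $\tau$ I extract a global choice function: given any nonempty set $x$, consider the position of length one in which the challenger has played ($n=1$, say, and then) the set $x$; let $F(x)$ be the response dictated by $\tau$ at this position. Since $\tau$ is winning, $\tau$ never directs the chooser to violate membership, so $F(x)\in x$; and $F$ is a class function because $\tau$ is a class. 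Thus $F$ is a global choice function, and from a global choice function one obtains a global well-ordering of the universe by the usual argument (well-order each $V_\alpha$ using $F$ and patch together, or directly define ranks of hereditarily-choice-generated sets), giving the global axiom of choice.

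The main obstacle — really the only subtlety — is the passage from ``open'' to ``clopen,'' and here one must be slightly careful that truncating the game to finitely many rounds does not destroy the argument that the chooser's strategy yields a \emph{total} choice function: a strategy in the truncated game only tells the chooser how to respond to a set $x$ presented \emph{as the first move}, but that is precisely the position we query, so a single round suffices and the truncation causes no loss. (One could alternatively keep the game open and invoke open determinacy, but since the theorem is stated for clopen determinacy the truncation is the honest route.) A second minor point to check is that \GB\ proves set-\AC, so that the ``chooser always survives'' argument goes through; this is part of the standard axiomatization, and even without it the mere nonemptiness of each $x_n$ suffices to avoid a \emph{finite}-stage violation, which is all the challenger could ever exploit. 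Everything else is routine.
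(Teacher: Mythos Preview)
Your proof is correct and rests on the same core idea as the paper's: a game in which one player names a nonempty set and the other must produce an element of it, so that a winning strategy for the second player is literally a global choice function. The paper, however, skips your detour entirely: it uses the two-move game (player~I plays a nonempty set $b$, player~II plays $a$, player~II wins iff $a\in b$), which is clopen outright since the winner is decided after one move each, so no counting-down truncation is needed. You essentially rediscover this at the end when you observe that ``a single round suffices''; starting from the one-round game would have saved you the infinite-game-then-truncate maneuver and the attendant worries about set-\AC\ (which, incidentally, is \emph{not} part of \GB, though as you correctly note, mere nonemptiness suffices for the finite game).
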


\begin{proof}
Consider the game where player I plays a nonempty set $b$ an player II plays a set $a$, with player II winning if $a\in b$. This is a clopen game, since it is over after one move for each player. Clearly, player I can have no winning strategy, since if $b$ is nonempty, then player II can win by playing any element $a\in b$. But a winning strategy for player II amounts exactly to a global choice function, selecting uniformly from each nonempty set an element.
\end{proof}

The set analogue of the proof of theorem~\ref{Theorem.ClopenDeterminacyImpliesAC} shows in \ZF\ that clopen determinacy for set-sized games implies the axiom of choice, and so over \ZF\ the principle of clopen determinacy for set-sized games is equivalent to the axiom of choice. As a consequence, we may prove in \ZF\ that the {\df universal axiom of determinacy}, which asserts that every game on every set is determined, is simply false: either there is some clopen game that is not determined, or the axiom of choice holds and there is a game on the natural numbers that is not determined. Pushing this idea a bit further leads to the following:

\begin{theorem}\label{Theorem.DefinableNondeterminedGame}
 There is a $\Delta^0_2$-definable set-sized game in \ZF\ that is not determined.
\end{theorem}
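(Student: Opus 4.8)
The plan is to combine the two ideas that have just been made available: first, the argument of Theorem~\ref{Theorem.ClopenDeterminacyImpliesAC} (in its set-sized \ZF\ form) shows that clopen determinacy for set games is equivalent to the axiom of choice; second, the classical theorem of Gale and Stewart together with a diagonalization shows that, in \ZFC, the game $G_{\neg\AD}$ on $\omega$ defined by a suitable $\Delta^0_2$ payoff set is undetermined. So I would exhibit a single $\Delta^0_2$-definable game $G$ on a set such that: if $\AC$ holds, then $G$ is (arranged to be) the standard undetermined game on $\omega$; and if $\AC$ fails, then $G$ is a clopen choice game witnessing the failure of clopen determinacy. Since \ZF\ proves that exactly one of these two alternatives obtains, \ZF\ proves that $G$ is undetermined in either case.

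Concretely, here are the steps. First, recall the standard construction of an undetermined game on $\omega$ from a wellordering of $\R$: one enumerates all strategies $\<\sigma_\xi\mid\xi<\continuum>$ and by transfinite recursion builds a payoff set $A\of\omega^\omega$ that defeats each $\sigma_\xi$ as player~I's strategy and each as player~II's strategy; the resulting $A$ is not determined. The point to check is that, with a fixed wellordering of $\R$ supplied as a parameter, this $A$ can be taken to be $\Delta^0_2$-definable over the structure $\<\HC,{\in}>$ or, after coding, as a $\Delta^0_2$-in-the-\Levy-hierarchy definable class of reals — indeed one wants the complexity bound to match the statement. Second, write down the ``merged'' game: on a designated first coordinate the players are forced to interact with the clopen choice game of Theorem~\ref{Theorem.ClopenDeterminacyImpliesAC} — player~I names a nonempty set $b$, player~II names $a$ and wins iff $a\in b$ — except that we only ``activate'' this subgame in the absence of a global (here: set) choice function, and on the remaining coordinates the players play the undetermined game $A$ built from a wellordering of $\R$ when such a wellordering exists. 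The bookkeeping needed to make ``$\AC$ holds'' / ``$\AC$ fails'' into a $\Delta^0_2$ side condition, without blowing up the complexity, is where the care is required. Third, verify by cases inside \ZF: in any model where $\AC$ holds, $\R$ is wellorderable, the undetermined-$A$ part is active, and the usual diagonalization shows $G$ is undetermined; in any model where $\AC$ fails, there is a nonempty set with no choice function, the clopen subgame is active, and neither player has a winning strategy in it (player~I because any nonempty $b$ has an element, player~II because a winning strategy would be a choice function, contradicting the failure of $\AC$ at that set). In both cases $G$ is undetermined, and $G$ is $\Delta^0_2$.

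The main obstacle I anticipate is the complexity calculation: to get the payoff set genuinely $\Delta^0_2$ in the \Levy\ hierarchy (rather than merely ``definable''), one must be careful about how the wellordering-of-$\R$ parameter is threaded through and how the diagonalization recursion is expressed — a naive reading of ``build $A$ by recursion defeating all strategies'' is far more complex than $\Delta^0_2$. The right move is probably to package the whole thing as: ``player~I wins iff [a clopen condition about the choice subgame] or [a $\Pi^0_1$ condition asserting that the real played codes, via the fixed wellordering, a position beating the relevant strategy], and player~II wins otherwise,'' checking that the Boolean combination lands in $\Delta^0_2$. A secondary subtlety is ensuring the $\AC$-holds-versus-$\AC$-fails dichotomy is exhaustive \emph{inside} the game's definition — i.e. that the game is well-defined as a single formula regardless of which case the ambient model is in — which one handles by making the choice subgame always present but vacuous (say, a guaranteed win condition that collapses to nothing) when a choice function on the relevant set already exists. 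Once the complexity bookkeeping is pinned down, the determinacy-failure arguments in each case are exactly the two proofs already given (Gale--Stewart diagonalization, and Theorem~\ref{Theorem.ClopenDeterminacyImpliesAC}) and require no new ideas.
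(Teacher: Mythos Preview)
Your high-level strategy---combine the clopen choice game with an undetermined game on the reals---is the right shape, but the implementation you sketch has a genuine gap at precisely the point you flag as ``bookkeeping.'' The assertion ``$\AC$ holds'' (or ``$\R$ is wellorderable'') is nowhere near $\Delta^0_2$ in the \Levy\ hierarchy, so the game definition cannot branch on it. Nor can you thread a wellordering of $\R$ through as a parameter: in a \ZF\ model where $\AC$ fails there may be no such object at all, and even in \ZFC\ there need not be a \emph{definable} one, so the diagonalization you describe does not yield a single parameter-free $\Delta^0_2$ payoff set. Your proposed patch (make the choice subgame ``vacuous when a choice function exists'') does not help, since detecting that existence is again too complex.

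The paper sidesteps this completely with one idea: rather than \emph{defining} a specific undetermined set, it makes player~I \emph{play} one. The game is: player~I plays a nonempty $A\of\omega^\omega$; player~II plays some $a\in A$; player~I plays a non-determined $B\of\omega^\omega$; and the rest is a play of $G_B$, with the first player to violate a requirement losing. The predicate ``$B$ is not determined'' is honestly $\Delta^0_2$ in the \Levy\ sense---$\Pi^0_2$ because every strategy for either player is defeated by some counter-strategy, and $\Sigma^0_2$ because it is decided inside the set $V_{\omega+3}$---so the winning condition is $\Delta^0_2$ with no case split on $\AC$. Player~I cannot win, since he would have to win $G_B$ for a non-determined $B$. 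If player~II had a winning strategy, its first move would give a choice function on nonempty sets of reals, hence a wellordering of $\R$, hence non-determined sets exist; player~I plays one, and player~II's continuing strategy would then win $G_B$, a contradiction. The wellordering is extracted \emph{from the hypothetical strategy}, not built into the game's definition---that is the missing move.
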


\begin{proof}
Let us emphasize that in the statement of the theorem we are referring to $\Delta^0_2$ in the sense of the \Levy\ hierarchy of the first-order language of set theory (and not in the sense of the arithmetic or projective hierarchies of descriptive set theory).

Consider the game $G$ where player I begins by playing a nonempty set of reals $A\of\omega^\omega$, with player II next playing an element of it $a\in A$; after this, player I plays a non-determined set $B\of\omega^\omega$, and then play proceeds as in the game $G_B$ determined by $B$. The first player to violate those requirements loses, and otherwise the winner is determined by the resulting play in $G_B$.

The winning condition for this game is $\Delta^0_2$-definable in set theory, because it is sufficiently local. For example, the assertion that a set $B\of\omega^\omega$ is non-determined has complexity $\Delta^0_2$ in set theory: ($\Pi^0_2$) for every strategy for one of the players, there is a strategy for the other that defeats it; ($\Sigma^0_2$) there is a set $M$ such that $M=V_{\omega+3}$ and $M$ satisfies ``$B$ is not determined.''

Finally, we argue in \ZF\ that neither player has a winning strategy for this game. Player I cannot have a winning strategy in $G$, since this would require him to play a non-determined set $B$ and then win the play of $G_B$, contradicting that $B$ is not determined. If player II has a winning strategy in $G$, then by the argument of theorem~\ref{Theorem.ClopenDeterminacyImpliesAC}, we get the axiom of choice for sets of reals and hence a well-ordering of the reals. From this, we know that there are non-determined sets of reals $B$, which player I can play, and then the winning strategy for player II would provide a winning strategy for $G_B$, contradicting the assumption that $B$ was not determined.
\end{proof}

A second simple observation about theorem~\ref{Theorem.ClopenDeterminacyImpliesAC} is that this argument answers a special case of question~\ref{Question.CanWeProveOpenClassDeterminacy?}. Namely, since there are models of \ZFC\ where the global axiom of choice fails for definable classes, there must be some models of \ZFC\ having definable clopen games with no definable winning strategy. Our theorem~\ref{Theorem.DefinableNondeterminedGame}, in contrast, establishes the stronger result that \emph{every} model of \ZFC, including those with global choice, has a definable clopen game with no definable strategy, and furthermore, the definition of the game is uniform.

\section{Clopen determinacy is equivalent over \GBC\ to \ETR}

We shall now generalize the argument of theorem~\ref{Theorem.ClopenDeterminacyGivesTruthPredicate} to prove a stronger result, which we believe explains the phenomenon of theorem~\ref{Theorem.ClopenDeterminacyGivesTruthPredicate}. Specifically, in theorem~\ref{Theorem.ClopenDetIffETR} we shall prove that clopen determinacy is exactly equivalent over \GBC\ to the principle of elementary transfinite recursion \ETR\ over well-founded class relations. This explains the result of theorem~\ref{Theorem.ClopenDeterminacyGivesTruthPredicate} because, as we have mentioned, truth itself is defined by such a recursion, namely, the familiar Tarskian recursive definition of truth defined by recursion on formulas, and so \ETR\ implies the existence of a satisfaction class for first-order truth.  Following Kentaro Fujimoto~\cite[Definition~88]{Fujimoto2012:Classes-and-truths-in-set-theory}, we introduce the principle of elementary transfinite recursion.
\begin{definition}\rm
 The principle of {\df elementary transfinite recursion} over well-founded class relations, denoted \ETR, is the assertion that every first-order recursive definition along any well-founded binary class relation has a solution.
\end{definition}

Let us explain in more detail. A binary relation $\lhd$ on a class $I$ is well-founded, if every nonempty subclass $B\of I$ has a $\lhd$-minimal element. This is equivalent in \GBC\ to the assertion that there is no infinite $\lhd$-descending sequence, and indeed one can prove this equivalence in $\GB+\DC$, meaning the dependent choice principle for set relations: clearly, if there is an infinite $\lhd$-descending sequence, then the set of elements on that sequence is a set with no $\lhd$-minimal element; conversely, if there is a nonempty class $B\of I$ with no $\lhd$-minimal element, then by the reflection principle relativized to the class $B$, there is some $V_\theta$ for which $B\intersect V_\theta$ is nonempty and has no $\lhd$-minimal element; but using \DC\ for $\lhd\intersect V_\theta$ we may successively pick $x_{n+1}\lhd x_n$ from $B\intersect V_\theta$, leading to an infinite $\lhd$-descending sequence. We find it interesting to notice that in this class context, therefore, well-foundedness for class relations becomes a first-order concept, which is a departure from the analogous situation in second-order number theory, where of course well-foundedness is $\Pi^1_1$-complete and definitely not first-order expressible in number theory.

Continuing with our discussion of recursion, suppose that we have a well-founded binary relation $\lhd$ on a class $I$, and suppose further that $\varphi(x,b,F,Z)$ is a formula describing the recursion rule we intend to implement, where $\varphi$ involves only first-order quantifiers, $F$ is a class variable for a partial solution and $Z$ is a fixed class parameter, henceforth suppressed. The idea is that $\varphi(x,b,F)$ expresses the recursive rule to be iterated. Namely, a {\df solution} of the recursion is a class $F\of I\times V$ such that for every $b\in I$, the $b^{\rm th}$ slice of the solution $F_b=\set{x\mid \varphi(x,b,F\restrict b)}$ is defined by the recursive rule, where $F\restrict b=F\intersect(\set{c\in I\mid c\lhd b}\times V)$ is the partial solution on slices prior to $b$. In this way, each slice of the solution $F_b$ is determined via the recursive rule from the values on the slices $F_c$ for earlier values $c$.

The principle of elementary transfinite recursion \ETR\ over well-founded class relations asserts that for every such well-founded relation $\langle I,{\lhd}\rangle$ and any first-order recursive rule $\varphi$ as above, there is a solution. One may equivalently consider only well-founded partial-order relations, or well-founded tree orders, or class well-orders.

\begin{lemma}
 The principle of \ETR\ is equivalently formulated over \GBC\ with any of the following types of well-founded relations:
 \begin{enumerate}
  \item well-founded class relations.
  \item well-founded class partial orders.
  \item well-founded class tree orders.
  \item well-ordered class relations.
 \end{enumerate}
\end{lemma}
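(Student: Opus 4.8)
The plan is to prove the equivalence by a cycle of reductions, showing $(1)\Rightarrow(2)\Rightarrow(3)\Rightarrow(4)\Rightarrow(1)$, where $(1)\Rightarrow(2)\Rightarrow(3)\Rightarrow(4)$ are essentially trivial and the substantive content is in $(4)\Rightarrow(1)$. For the easy direction, note that a well-ordered class relation is in particular a well-founded tree order (every element has at most one predecessor chain, indeed is linearly ordered below), a well-founded tree order is in particular a well-founded partial order, and a well-founded partial order is in particular a well-founded class relation; so each principle on the list syntactically implies the one below it. Hence \ETR\ stated for arbitrary well-founded class relations implies \ETR\ stated for each of the more restrictive classes, giving $(1)\Rightarrow(2)\Rightarrow(3)\Rightarrow(4)$ immediately.

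The work is to recover \ETR\ for an arbitrary well-founded class relation $\langle I,\lhd\rangle$ from \ETR\ for class well-orders. First I would pass from $\lhd$ to its rank function. Since $\lhd$ is well-founded, in \GBC\ we may define the ordinal rank $\rho(b)=\sup\{\rho(c)+1\mid c\lhd b\}$ by a recursion along $\lhd$ itself — but of course that recursion is exactly the kind of thing we are trying to justify, so this must be handled carefully: the rank function is obtained as a solution of a particular elementary recursion, and one checks that it is in fact recoverable from an \ETR-along-a-well-order instance by first observing that the existence of \emph{some} rank function (a class function $\rho\colon I\to\Ord$ with $c\lhd b\Rightarrow \rho(c)<\rho(b)$) follows from \ETR\ for well-orders applied along $\Ord$: one recursively builds the classes $I_\alpha=\{b\in I\mid$ every $\lhd$-predecessor of $b$ lies in $\bigcup_{\beta<\alpha}I_\beta\}$, and well-foundedness (in its first-order, no-infinite-descending-sequence form, available in \GBC\ as noted in the excerpt) guarantees $\bigcup_\alpha I_\alpha = I$. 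With a rank function $\rho$ in hand, I would then simulate the given recursion $\varphi$ along $\lhd$ by a recursion along the class well-order $\Ord$ (or along a well-order coding pairs $(\alpha, \text{set of relevant slices})$): at stage $\alpha$ one defines the slices $F_b$ for all $b$ with $\rho(b)=\alpha$ simultaneously, using the partial solution $F\restrict(\{c\mid \rho(c)<\alpha\}\times V)$ already constructed, which by monotonicity of $\rho$ contains $F\restrict b$ for every such $b$. One must check that "the slices $F_b$ for $\rho(b)=\alpha$" is itself a legitimate elementary recursion rule along $\Ord$ — the rule at stage $\alpha$ quantifies only over sets and uses the class parameter $F$ restricted to earlier ranks, which is exactly an elementary transfinite recursion along the well-order $\Ord$; here one uses that for each fixed $\alpha$ the collection $\{b\in I\mid \rho(b)=\alpha\}$ is a class and the assignment $b\mapsto F_b$ is given by the first-order formula $\varphi$. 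Then $F$ is the desired solution of the original recursion along $\lhd$.

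The main obstacle I expect is the bookkeeping around replacing the value-at-a-single-node recursion by a value-at-a-whole-rank-level recursion while keeping the rule genuinely \emph{first-order} (elementary) in the class parameter for the partial solution — in particular, ensuring that restricting the accumulated solution to "nodes of rank $<\alpha$" is uniformly definable from $F$ and $\rho$ by a first-order formula, and that one is not smuggling in an extra layer of class quantification. A secondary subtlety is the very first step, extracting the rank function: one should be explicit that this instance of \ETR\ along $\Ord$ produces a \emph{class} $\rho\subseteq I\times\Ord$ and that its being total and order-preserving is verified using only \GBC\ plus the first-order characterization of well-foundedness, with no appeal to the general \ETR\ we are trying to prove. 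Once these points are pinned down, the equivalences (1)--(4) follow, and combined with the forthcoming theorem~\ref{Theorem.ClopenDetIffETR} this shows clopen determinacy is robust under the choice of which well-founded relations one recurses along.
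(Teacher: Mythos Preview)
Your cycle $(1)\Rightarrow(2)\Rightarrow(3)\Rightarrow(4)$ is fine, and your step 2 (simulating the $\lhd$-recursion rank-by-rank along a well-order, once a rank function is in hand) is also fine. The gap is in step 1: your argument that $\bigcup_{\alpha\in\Ord} I_\alpha = I$ fails for well-founded class relations that are not set-like. Concretely, take $I=\Ord\cup\{*\}$ with $\alpha\lhd *$ for every ordinal $\alpha$ and $\alpha\lhd\beta$ iff $\alpha<\beta$ on $\Ord$. This is well-founded (no infinite descending sequence), but the predecessors of $*$ have ranks cofinal in $\Ord$, so $*$ never enters any $I_\alpha$. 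Your minimality argument breaks exactly here: a $\lhd$-minimal element $b$ of $I\setminus\bigcup_\alpha I_\alpha$ has all predecessors in some $I_{\alpha_c}$, but nothing bounds the $\alpha_c$'s when $b$ has a proper class of predecessors. So there is in general no rank function $\rho\colon I\to\Ord$, and recursing along $\Ord$ does not suffice.

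The paper sidesteps this entirely by proving the converse implications one step at a time, $(2\to 1)$, $(3\to 2)$, $(4\to 3)$, using purely syntactic transformations that never require a rank function: pass to the transitive closure to get a partial order from a relation; pass to the tree of finite descending sequences to get a tree from a partial order; and linearize a well-founded tree by the Kleene-Brouwer order (using global choice) to get a class well-order. The point of the last step is that Kleene-Brouwer produces a class well-order of the correct length automatically, without ever computing ranks. Your approach could be repaired by first manufacturing a sufficiently long class well-order to rank into---but the natural way to do that is precisely the tree-of-descending-sequences plus Kleene-Brouwer construction, at which point you have essentially reproduced the paper's argument.
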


\begin{proof}
Since the families of relations are becoming more specialized, it is clear that each statement implies the next $(1\to 2\to 3\to 4)$. It remains to prove the converse implications.

$(2\to 1)$ If $\lhd$ is any well-founded binary class relation, then let $<$ be the transitive closure of $\lhd$, which is a well-founded partial order. Any first-order recursion on $\lhd$ is easily transformed to a recursion on $<$, using $\lhd$ as a class parameter if necessary.

$(3\to 2)$. If $<$ is any well-founded partial order, then let $T$ be the tree of all finite $<$-descending sequences, ordered by extension, so that longer is lower. This tree is well-founded, and any recursion defined on $<$ can be easily transferred to the tree order.

$(4\to 3)$ Suppose that we have a first-order recursion defined on a well-founded class tree $T$ of sequences (the root is at top, the tree grows downward). We can linearize the tree by the Kleene-Brouwer order, by which $s$ is less than $t$ if $s$ extends $t$ or if when they disagree, then $s$ is lower than $t$ in that coordinate, with respect to a fixed global well-ordering of the universe. This is a class well-order, and it is easy to transfer the recursion from $T$ to this linear order.
\end{proof}
Note that if $\GBC$ is consistent, then it does not prove that all recursions along set well-orders have a solution,  because a recursion of length $\omega$ suffices to define a truth predicate by the Tarskian recursion on formulas, and the existence of such a predicate implies $\Con(\ZFC)$ and therefore also $\Con(\GBC)$. So \GBC\ plus \ETR\ is strictly stronger than \GBC\ in consistency strength, although it is provable in Kelley-Morse set theory \KM, in essentially the same way that \GBC\ proves the set-like special case.

Next, let's explain the tight connection between \ETR\ and the existence of iterated truth predicates, a result similar to those obtained by Fujimoto~\cite{Fujimoto2012:Classes-and-truths-in-set-theory} (for instance, see his corollary 61). If $\<I,\lhd>$ is a class well-order, then $T$ is an {\df iterated truth predicate} over $\<I,\lhd>$ relative to $Z$, if for each $i\in I$, the $i^{\rm th}$ slice $T_i$ is a truth predicate for the structure $\<V,{\in},Z,T\restrict i>$, satisfying the Tarskian recursion for formulas in the language of set theory augmented with predicates for $Z$ and $T\restrict i$, where $T\restrict i=T\intersect(\set{j\in I\mid j\lhd i}\times V)$ is the restriction of $T$ to the $\lhd$-earlier stages of truth.

\begin{theorem}\label{Theorem.ETR-iff-iterated-truth-predicate}
The principle \ETR\ of elementary transfinite recursion is equivalent over \GBC\ to the assertion that for every class parameter $Z$ and every class well-ordering $\<I,\lhd>$ there is an iterated truth predicate $T$ along $\<I,\lhd>$ over the parameter $Z$.
\end{theorem}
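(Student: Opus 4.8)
The plan is to prove the two implications separately. The forward direction, that \ETR\ yields iterated truth predicates, amounts to recognizing the construction of an iterated truth predicate as a single instance of elementary transfinite recursion along a suitably enlarged well-order. Given a class well-order $\<I,\lhd>$ and a parameter $Z$, consider the class well-order $\<I\times\omega,\prec>$, where $(i,n)\prec(j,m)$ just in case $i\lhd j$, or else $i=j$ and $n<m$; it is routine that $\prec$ is again a class well-order. Run the recursion along $\prec$ whose rule, at a stage $(i,n)$, places a pair $\<\ulcorner\psi\urcorner,\vec a>$ into $G_{(i,n)}$ exactly when $\psi$ is a formula of syntactic complexity $n$ in the language of set theory augmented with predicates $\dot Z$ and $\dot T$, and $\psi(\vec a)$ is declared true by the Tarskian clause appropriate to complexity $n$, computed from the portion of the solution already built: the atomic clause for $\dot Z$ consults the parameter $Z$; the atomic clause for $\dot T(a_1,a_2)$ asks whether $a_1\lhd i$ and $a_2$ lies in some $G_{(a_1,m)}$, which is available since $(a_1,m)\prec(i,n)$ for every $m$; and the connective and quantifier clauses consult the earlier slices $G_{(i,n-1)}$ and $G_{(i,m)}$ for $m<n$ in the usual way. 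Each of these clauses is first-order over the partial solution, so this is a genuine \ETR\ instance; setting $T=\set{\<i,y>\mid\exists n\ \<(i,n),y>\in G}$, one verifies by induction on $\lhd$ and then on formula complexity that each slice $T_i$ satisfies the Tarskian recursion over $\<V,{\in},Z,T\restrict i>$, so that $T$ is the desired iterated truth predicate. (As in the discussion after theorem~\ref{Theorem.ClopenDeterminacyGivesTruthPredicate}, ``formula'' here is taken in the sense of $V$, possibly $\omega$-nonstandard, which is unproblematic because the complexity recursion runs along the genuine set well-order $\omega$.)

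For the converse, that iterated truth predicates yield \ETR, it suffices by the preceding lemma to solve recursions along a class well-order. Fix a class well-order $\<I,\lhd>$ and a first-order recursion rule $\varphi(x,b,F,Z)$, and apply the hypothesis with a single class parameter $Z'$ coding the triple $\<Z,I,\lhd>$, producing an iterated truth predicate $T$ along $\<I,\lhd>$ over $Z'$, so that each $T_i$ is a truth predicate for $M_i:=\<V,{\in},Z',T\restrict i>$. The crux is that both the current stage $b$ and the partial solution $F\restrict b$ are \emph{uniformly} definable inside $M_b$: since each $T_c$ is nonempty (it contains true atomic assertions), $\dom(T\restrict b)=\set{c\mid c\lhd b}$, so $b$ is recovered inside $M_b$ as the $\lhd$-least element of $I$ not in the domain of the $\dot T$-predicate, using $I$ and $\lhd$ decoded from $Z'$. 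By the fixed-point lemma there is a formula $\chi(x)$ in the language of $M_b$ such that every structure $\<V,{\in},Z',S>$ satisfies $\chi(a)\iff\varphi(a,b^*,F^*,Z)$, where $b^*$ is the element of $I$ just described (computed from $S$) and $F^*$ is the class $\set{\<c,v>\mid c\lhd b^*\ \wedge\ \<c,\,\ulcorner\chi\urcorner^{v}>\in S}$, with $\ulcorner\chi\urcorner^{v}$ the arithmetized substitution of $v$ for the free variable into the code $\ulcorner\chi\urcorner$ of $\chi$ itself. Now put $F:=\set{\<b,x>\mid b\in I\ \wedge\ \ulcorner\chi\urcorner^{x}\in T_b}$, so that $F_b=\set{x\mid M_b\satisfies\chi(x)}$, and argue by induction on $\lhd$ that $F$ solves the recursion: assuming $F_c=\set{x\mid\varphi(x,c,F\restrict c,Z)}$ for every $c\lhd b$, one checks that the element $b^*$ computed in $M_b$ (from $S=T\restrict b$) is $b$, that the class $F^*$ computed there is exactly $F\restrict b$ by the definition of $F$, and that $\varphi^{M_b}(x,b,F\restrict b,Z)$ agrees with $\varphi(x,b,F\restrict b,Z)$ as evaluated in $V$, since $\varphi$ has only first-order quantifiers and $M_b$ has domain $V$; hence $F_b=\set{x\mid\varphi(x,b,F\restrict b,Z)}$, as required.

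The main obstacle is the converse direction, and within it the demand for a \emph{uniform} way to extract the solution from the iterated truth predicate. The naive idea of defining $F_b$ over $M_b$ fails on two counts: the defining formula would appear to depend on $b$, and already to refer to $F\restrict b$ inside $M_b$ one would need to mention the very formula being defined. Both difficulties are dissolved by the two observations above --- recovering $b$ from $\dom(T\restrict b)$, which is why $I$ and $\lhd$ must be folded into the parameter, and invoking the fixed-point lemma to absorb the self-reference. A smaller point, handled exactly as in the remarks following theorem~\ref{Theorem.ClopenDeterminacyGivesTruthPredicate}, is the presence of $\omega$-nonstandard formulas; this is harmless here because every auxiliary recursion used is along the genuine set well-order $\omega$.
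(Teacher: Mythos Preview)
Your proof is correct and follows essentially the same route as the paper: the forward direction via a recursion of length $\omega\cdot\<I,\lhd>$ implementing the Tarskian clauses level by level, and the converse via \Godel's fixed-point lemma to produce a single self-referential formula whose extension in each slice $T_i$ yields the solution $F$. The only cosmetic difference is that you recover the current stage $b$ internally as the $\lhd$-least element of $I$ outside $\dom(T\restrict b)$ (which is why you fold $I,\lhd$ into the parameter $Z'$), whereas the paper simply passes the stage $i$ as an explicit free variable of the fixed-point formula $\bar\varphi(x,i)$.
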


\begin{proof}
The forward implication is straightforward, since the truth predicate itself is defined by a transfinite recursion of length $\omega\cdot \<I,\lhd>$. Namely, to get the truth predicate for the next stage, one simply performs the Tarskian recursion through the formula complexity hierarchy, which has height $\omega$.

Conversely, suppose that for class parameter $Z$ and class well-ordering $\<I,\lhd>$, we have an iterated truth predicate $T$ over $\lhd$ relative to $Z$. Now suppose that we have an instance of \ETR\, iterating a formula $\varphi(x,i,F,Z)$ along $\<I,\lhd>$. We claim that from parameter $T$, we may define a solution $F$ to this recursion. Specifically, we claim that there is a formula $\bar\varphi$ such that if one extracts from $T$ the class defined by $\bar\varphi$, namely, $F=\set{\<i,x>\mid T(i,\<\bar\varphi,x>)}$, then $F$ is a solution to the recursion of $\varphi$ along $\lhd$. The formula $\bar\varphi$ should simply be chosen so that $\<V,{\in},Z,T\restrict i>\satisfies\bar\varphi(x,i)$ if and only if $\<V,{\in},Z,F\restrict i>\satisfies\varphi(x,i)$, where $F$ is defined as just mentioned using $\bar\varphi$. Such a formula $\bar\varphi$ exists by \Godel's fixed-point lemma: for any $e$, let $\psi(e,x,i)$ be the assertion $\<V,{\in},Z,\set{\<i,x>\mid T(i,\<e,x>)}>\satisfies\varphi(x,i)$, and then by the usual fixed-point trick find a formula $\bar\varphi(x,i)$, for which $\<V,{\in},Z,T\restrict i>\satisfies\psi(\bar\varphi,x,i)\iff\bar\varphi(x,i)$. It follows that the class $F$ iteratively defined from $T$ by $\bar\varphi$ satisfies $\varphi$ at each step and therefore is a solution to the recursion of $\varphi$ along $\lhd$, as desired.
\end{proof}

An important immediate consequence of theorem~\ref{Theorem.ETR-iff-iterated-truth-predicate} is that \ETR\ is expressible as a single second-order assertion in the language of \GBC, and so we needn't treat it as a scheme (see also the remarks at the beginning of the proof of~\cite[theorem~88]{Fujimoto2012:Classes-and-truths-in-set-theory}). Namely, \ETR\ is equivalent to the assertion that for every class well-order $\<I,\lhd>$ and every class parameter $Z$, there is an iterated truth-predicate $T$ along $\<I,\lhd>$ relative to $Z$.

We come now to the next main contribution of this article, the equivalence of clopen determinacy for class games with \ETR, and with the existence of iterated truth predicates over class well-orders.

\goodbreak
\begin{theorem}\label{Theorem.ClopenDetIffETR}
In \Godel-Bernays set theory \GBC, the following are equivalent.
\begin{enumerate}
 \item Clopen determinacy for class games. That is, in any two-player game of perfect information whose winning condition class is both open and closed, there is a winning strategy for one of the players.
 \item The principle \ETR\ of elementary transfinite recursion over well-founded class relations: every such recursion has a solution.
 \item Existence of iterated-truth predicates. That is, for every class parameter $Z$ and every class well-ordering $\<I,\lhd>$, there is an iterated truth predicate $T$ along $\<I,\lhd>$ over parameter $Z$.
\end{enumerate}
\end{theorem}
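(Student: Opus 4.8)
The plan is to prove $(2)\Rightarrow(1)$ and $(1)\Rightarrow(2)$, since the equivalence $(2)\Leftrightarrow(3)$ is already supplied by theorem~\ref{Theorem.ETR-iff-iterated-truth-predicate}.

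For $(2)\Rightarrow(1)$, I would resurrect the ordinal-game-value proof of the Gale--Stewart theorem, but replace the tower of ordinal values by a recursion along a well-founded \emph{class} relation, which is exactly what \ETR\ provides. Given a clopen class game with payoff class $A\of X^\omega$, call a position $p\in X^{\ltomega}$ \emph{decided} if all plays extending $p$ agree about membership in $A$, and \emph{undecided} otherwise. Because the game is clopen---both $A$ and its complement are open---every play has a decided initial segment, so the tree of undecided positions has no infinite branch; hence, using that in \GBC\ well-foundedness of a class relation is equivalent to the absence of an infinite descending sequence, the relation that makes the immediate extensions of each undecided position its $\lhd$-predecessors is a well-founded class relation (possibly proper-class branching, which the formulation of \ETR\ accommodates). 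Apply \ETR\ to the first-order recursion, with $A$ as parameter, that labels each decided position by its winner and labels an undecided position by ``I'' exactly when either player~I is to move and some immediate extension is so labeled, or player~II is to move and every immediate extension is so labeled. This yields a labeling class $L$, from which one reads off---using global choice to select among the available moves---a class strategy for whichever player labels the empty position: that player moves so as to preserve the label, which she can always do on her turns and which is forced through the opponent's turns, and by well-foundedness the play reaches a decided position, necessarily decided in her favor. Hence the game is determined.

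For $(1)\Rightarrow(2)$, I would generalize the counting-down truth-telling game of theorem~\ref{Theorem.ClopenDeterminacyGivesTruthPredicate} to an \emph{iteration game} attached to a given instance of \ETR, which by the preceding lemma on equivalent formulations of \ETR\ we may take to be a first-order recursion rule $\varphi(x,i,F,Z)$ along a class well-order $\<I,\lhd>$. In this game the interrogator repeatedly queries whether $\<i,x>$ belongs to the putative solution $F$ and may at any stage demand that the truth-teller justify a slice $F_i$ by playing out the Tarskian evaluation of $\varphi(x,i,F\restrict i)$ in $\<V,{\in},Z,F\restrict i>$, whose atomic subqueries about $F\restrict i$ are themselves membership queries at coordinates $\lhd$-below $i$; the truth-teller answers ``true'' or ``false'' (supplying a witness in the existential case) and must never contradict an atomic fact, a Tarskian clause, or the coherence between her $F$-answers and her $\varphi$-evaluations. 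To make the game clopen rather than merely open, I would also require the interrogator to announce on each move a descending sequence of ordinals, so that the winner---the interrogator, if some required condition is ever broken before the counter reaches $0$; otherwise the truth-teller---is known at a finite stage. The interrogator can have no winning strategy: given a putative one $\sigma$, the reflection theorem yields a club of cardinals $\theta$ with $V_\theta$ closed under $\sigma$; since $\<I\cap V_\theta,\lhd>$ is then a \emph{set} well-order and $\<V_\theta,{\in},Z\cap V_\theta>$ is a set structure, ordinary transfinite recursion in \ZFC\ produces a genuine solution $f$ to the recursion over it, and the truth-teller, answering from $f$ and a Skolem function on $V_\theta$, never leaves $V_\theta$ and never breaks a required condition, defeating $\sigma$. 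So clopen determinacy hands the truth-teller a winning strategy $\tau$.

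It then remains, as in lemma~\ref{Lemma.TruthTellerStrategyIffSatisfactionClass} and its counting-down refinement, to extract a solution class from $\tau$: an induction along $\lhd$ and, within each coordinate, along formula complexity shows that $\tau$'s answer to any fixed query stabilizes for plays launched with a sufficiently large ordinal and is then independent of the interrogator's subsequent play, and the stabilized affirmative answers to the membership queries define a class $F\of I\times V$ which, by the coherence constraints, satisfies $F_i=\set{x\mid\varphi(x,i,F\restrict i)}$ for every $i$---a solution. (Specializing $\varphi$ to the Tarskian rule turns the iteration game into an \emph{iterated-truth-telling game} and yields statement~(3) directly.) Combining, $(2)\Rightarrow(1)\Rightarrow(2)$ gives $(1)\Leftrightarrow(2)$, and with theorem~\ref{Theorem.ETR-iff-iterated-truth-predicate} all three statements are equivalent. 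I expect the main obstacle to be precisely this last extraction step: one must confirm that the truth-teller's answers cohere into a single class simultaneously across the $\lhd$-hierarchy and the formula-complexity hierarchy, and one must take care---echoing the $\omega$-nonstandard subtlety noted after theorem~\ref{Theorem.ClopenDeterminacyGivesTruthPredicate}---that the coherence argument also covers the possibly nonstandard formulas the interrogator is entitled to ask about, while the ordinal ``clock'' must be arranged so that the game is genuinely clopen yet still leaves the truth-teller enough time to resolve every problematic case.
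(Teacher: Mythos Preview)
Your proposal is correct and follows essentially the same approach as the paper: for $(2)\Rightarrow(1)$ you label the well-founded game tree by an \ETR\ recursion and read off a strategy via global choice, exactly as the paper does; for $(1)\Rightarrow(2)$ you define a recursion game in which the truth-teller must answer consistently with a putative solution $F$ and with the Tarskian evaluation of the recursion rule, defeat any interrogator strategy by reflecting to a $V_\theta$ closed under it and solving the set-sized recursion there, and then extract $F$ from the truth-teller's winning strategy by a double induction along $\lhd$ and formula complexity---all matching the paper's argument. The only cosmetic differences are that the paper phrases the game as allowing arbitrary first-order inquiries in the language $\langle V,{\in},{\lhd},F\rangle$ (rather than specifically $F$-membership and $\varphi$-justification queries), first treats the open version before adding the count-down clock, and in the reflection step invokes the satisfaction class already available from clopen determinacy to obtain full elementarity $V_\theta\prec V$ rather than mere closure under $\sigma$; none of these affects the substance of the argument.
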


\begin{proof}
($2\leftrightarrow 3$) This is established by theorem~\ref{Theorem.ETR-iff-iterated-truth-predicate}.

($2\to 1$) Assume the principle \ETR\ of elementary transfinite recursion, and suppose we are faced with a clopen game. Consider the game tree, consisting of positions arising during play, up to the moment that a winner is known, orienting the tree so that the root is at the top and play proceeds downward. This tree is well-founded precisely because the game is clopen. Let us label the terminal nodes of the tree with I or II according to who has won the game in that position, and more generally, let us label all the nodes of the tree with I or II according to the following transfinite recursion: if a node has I to play, then it will have label I if there is a move to a node already labeled I, and otherwise II; similarly, when it is player II's turn to play, then if she can play to a node labeled II, we label the original node with II, and otherwise I. By the principle of elementary transfinite recursion, there is a labeling of the entire tree that accords with this recursive rule. It is now easy to see that if the initial node is labeled with I, then player I has a winning strategy, which is simply to stay on the nodes labeled I. (We use the global choice principle to choose a particular such node with the right label; this use can be avoided if the space $X$ of possible moves is already well-ordered, such as in the case of games on the ordinals $X=\Ord$.) Note that player II cannot play in one move on her turn from a node labeled I to one labeled II. Similarly, if the initial node is labeled II, then player II has a winning strategy, which is simply to stay on the nodes labeled II. And so the game is determined, and we have established clopen determinacy.

($1\to 2$) This implication is the main new content of this theorem. Assume the principle of clopen determinacy for class games, and suppose that we are faced with a recursion along a well-founded class partial-order relation $\lhd$ on a class $I$, using a first-order recursion rule $\varphi(x,b,F)$, possibly with a fixed class parameter $Z$, which we suppress. We shall define a certain clopen game, and prove that any winning strategy for this game will produce a solution for the recursion.

At first, we consider a simpler open game, the {\df recursion game}, which will be much like the truth-telling game used in theorem~\ref{Theorem.ClopenDeterminacyGivesTruthPredicate}, except that in this game, the truth-teller will also provide information about the putative solution of the recursion in question; later, we shall revise this game to a clopen game. In the recursion game, we have the same two players again, the interrogator and the truth-teller, but now the interrogator will make inquiries about truth in a structure of the form $\langle V,{\in},{\lhd},F\rangle$, where $\lhd$ is the well-founded class relation and $F$ is a binary class predicate, not yet specified, but which we hope will become a solution of the recursion, with $F\of I\times V$. Specifically, the interrogator is allowed to ask about the truth of any first-order formula $\varphi(\vec a)$ in the language of this structure and in particular to inquire as to whether $F(i,x)$ or not. The truth-teller, as before, will answer the inquiries by pronouncing either that $\varphi(\vec a)$ is true or that it is false, and in the case $\varphi(\vec a)=\exists x\,\psi(x,\vec a)$ and the formula was pronounced true, then the truth-teller shall also provide as before a witness $b$ for which she also pronounces $\psi(b,\vec a)$ to be true. The truth-teller loses immediately, if she should ever violate Tarski's recursive definition of truth, and she also is required to pronounce any instance of the recursion rule $F(i,x)\leftrightarrow\varphi(x,i,F\restrict i)$ to be true, where $F\restrict i$ denotes the class $F\intersect(\set{j\in I\mid j\lhd i}\times V)$. Specifically, we form the formula $\varphi(x,i,{F\restrict i})$ in the language of set theory with a predicate for $F$ by replacing any atomic occurrence of the predicate $F(j,y)$ in $\varphi$ with $F(j,y)\wedge j\lhd i$. Since violations of any of these requirements, if they occur at all, do so at a finite stage of play, it follows that the game is open for the interrogator.

\begin{sublemma}
 The interrogator has no winning strategy in the recursion game.
\end{sublemma}

\begin{proof}
To prove this lemma, we use a modification of the idea of lemma~\ref{Lemma.InterrogatorHasNoWinningStrategy}. Suppose that $\sigma$ is a strategy for the interrogator. So $\sigma$ is a class function that instructs the interrogator how to play next, given a position of partial play. By the reflection theorem, there is an ordinal $\theta$ such that $V_\theta$ is closed under $\sigma$, and using the satisfaction class that comes from clopen determinacy, we may actually also arrange that $\langle V_\theta,{\in},{\lhd}\cap V_\theta,\sigma\cap V_\theta\rangle\prec\langle V,{\in},{\lhd},\sigma\rangle$. Consider the relation $\lhd\cap V_\theta$, which is a well-founded relation on $I\cap V_\theta$. Since $\ZFC$ and hence $\GBC$ proves the existence of solutions to transfinite recursions for sets, there is a (unique) solution $f\of (I\cap V_\theta)\times V_\theta$ such that the $i^{\rm th}$ slice $f_i=\set{x\in V_\theta\mid \langle V_\theta,{\in},{\lhd}\cap V_\theta,f\rangle\satisfies\varphi(x,i,f\restrict i)}$ is defined by the recursion of $\varphi$, where $f\restrict i$ means the restriction of the predicate $f\intersect(\set{j\in I\intersect V_\theta\mid j\lhd i}\times V_\theta)$ to the predecessors of $i$ that are also in $V_\theta$. Consider now the play of the recursion game in $V$, where the interrogator uses the strategy $\sigma$ and the truth-teller plays in accordance with truth in the structure $\langle V_\theta,{\in},{\lhd}\cap V_\theta,f\rangle$, which is a little sneaky because the function $f$ is a solution of the recursion rule $\varphi$ only on the relation $\lhd\intersect V_\theta$, rather than the full relation $\lhd$. But since $V_\theta$ was closed under $\sigma$, the interrogator will never issue challenges outside of $V_\theta$ in this play; and since the function $f$ fulfills the recursive rule $f(i,x)\leftrightarrow\varphi(x,i,f\restrict i)$ in this structure, the truth-teller will not be trapped in any violation of the Tarski conditions or the recursion condition. Thus, the truth-teller will win this instance of the game, and so $\sigma$ is not a winning strategy for the interrogator, as desired.
\end{proof}

\begin{sublemma}
 The truth-teller has a winning strategy in the recursion game if and only if there is a solution of the recursion.
\end{sublemma}

\begin{proof}
If there is a solution $F$ of the recursion, then by clopen determinacy we know there is also a satisfaction class $\Tr$ for first order truth in the structure $\langle V,{\in},{\lhd},F\rangle$, and the truth-teller can answer all queries of the interrogator in the recursion game by referring to what $\Tr$ asserts is true in this structure. This will be winning for the truth-teller, since $\Tr$ obeys the Tarskian conditions and makes all instances of the recursive rule true with the predicate $F$.

Conversely, suppose that $\tau$ is a winning strategy for the truth-teller in the recursion game. We may see as before that the truth pronouncements made by $\tau$ about truth in the structure $\<V,{\in},\lhd>$ are independent of the play in which they occur, and they provide a satisfaction class for this structure. This is proved just as for the truth-telling game by induction on the complexity of the formulas: the strategy must correctly answer all atomic formulas, and the answers to more complex formulas must be independent of the play since violations of this would lead to violations of the Tarski conditions by reducing to simpler formulas, as before, and this would contradict our assumption that $\tau$ is a winning strategy for the truth-teller.

Consider next the truth pronouncements made by $\tau$ in the language involving the class predicate symbol $F$. We shall actually need this property only in the restricted languages, where for each $i\in I$, we consider formulas asserting truth in the structure $\<V,{\in},{\lhd},F\restrict i>$, rather than concerning truth in the full structure $\<V,{\in},{\lhd},F>$. We claim by induction on $i$, with an embedded induction on formulas, that for every $i\in I$, the truth pronouncements provided by the strategy $\tau$ in this language are independent of the play in which they are made and furthermore provide a truth predicate for a structure of the form $\<V,{\in},\lhd,F\restrict i>$. The case where $i$ is $\lhd$-minimal is essentially similar to the case we already handled, where no reference to $F$ is made, since $F\restrict i$ must be asserted to be empty in this case. Suppose inductively that our claim is true for assertions in the language with $F\restrict j$, whenever $j\lhd i$, and consider the language with $F\restrict i$. (Note that the claim we are proving by induction is first-order expressible in the class parameter $\tau$, and so this induction can be legitimately undertaken in \GBC; we haven't allowed an instance of $\Pi^1_1$-comprehension to sneak in here.) It is not difficult to see that $\tau$ must pronounce that the various predicates $F\restrict j$ cohere with one another on their common domain, since any violation of this will give rise to a violation of the Tarskian recursion. So our induction assumption ensures that $\tau$ has determined a well-defined class predicate $F\restrict i$. Furthermore, since $\tau$ is required to affirm that $F$ obeys the recursive rule, it follows that $\tau$ asserts that $F\restrict i$ obeys the recursive rule up to $i$.

We now argue by induction on formulas that the truth pronouncements made by $\tau$ about the structure $\<V,{\in},\lhd,F\restrict i>$ forms a satisfaction class for this structure. In the atomic case, the truth pronouncements about this structure are independent of the play of the game in which they occur, since this is true for atomic formulas in the language of set theory and for atomic assertions about $\lhd$, by the rules of the game, and it true for atomic assertions about $F\restrict i$ by our induction hypothesis on $i$. Continuing the induction, it follows that the truth pronouncements made about compound formulas in this structure are similarly independent of the play and obey the Tarskian conditions, since any violation of this can be easily exposed by having the interrogator inquire about the constituent formulas, just as in the truth-telling game. So the claim is also true for $F\restrict i$.

Thus, for every $i\in I$, the strategy $\tau$ is providing a satisfaction class for the structure $\<V,{\in},\lhd,F\restrict i>$, which furthermore verifies that the resulting class predicate $F\restrict i$ determined by this satisfaction class fulfills the desired recursion relation up to $i$. Since these restrictions $F\restrict i$ also all agree with one another, the union of these class predicates is a class predicate $F\of I\times V$ that for every $i$ obeys the desired recursive rule $F_i=\set{x\mid \varphi(x,i,F\restrict i)}$. So the recursion has a solution, and this instance of the principle of first-order transfinite recursion along well-founded class relations is true.
\end{proof}

So far, we have established that the principle of open determinacy implies the principle \ETR\ of elementary transfinite recursion. In order to improve this implication to use only clopen determinacy rather than open determinacy, we modify the game as in lemma~\ref{Lemma.TruthTellerStrategyIffSatisfactionClass} by requiring the interrogator to count-down during play. Specifically, the {\df count-down recursion game} proceeds just like the recursion game, except that now we also insist that the interrogator announce on the first move a natural number $n$, such that the interrogator loses if the truth-teller survives for at least $n$ moves (we could have had him count down in the ordinals instead, which would have made things more flexible for him, but the analysis is essentially the same). This is now a clopen game, since the winner will be known by the time this clock expires, either because the truth-teller will violate the Tarski conditions or the recursion condition before that time, in which case the interrogator wins, or else because she did not and the clock expired, in which case the truth-teller wins. So this is a clopen game.

Since the modified version of the game is even harder for the interrogator, there can still be no winning strategy for the interrogator. So by the principle of clopen determinacy, there is a winning strategy $\tau$ for the truth-teller. This strategy is allowed to make decisions based on the number $n$ announced by the interrogator on the first move, and it will no longer necessarily be the case that the theory declared true by $\tau$ will be independent of the interrogator's play, since the truth-teller can relax as the time is about to expire, knowing that there isn't time to be caught in a violation. Nevertheless, it will be the case, we claim, that the theory pronounced true by $\tau$ for all plays with sufficiently many remaining moves will be independent of the interrogator's play. One can see this by observing that if an assertion $\psi(\vec a)$ is independent in this sense, then also $\neg\psi(\vec a)$ will be independent in this sense, for otherwise there would be plays with a large number of plays remaining giving different answers for $\neg\psi(\vec a)$ and we could then challenge directly afterward with $\psi(\vec a)$, which would have to give different answers or else $\tau$ would not win. Similarly, since $\tau$ is winning for the truth-teller, one can see that allowing the interrogator to specify a bound on the total length of play does not prevent the arguments above showing that $\tau$ describes a coherent solution predicate $F\of I\times V$ satisfying the recursion $F(i,x)\leftrightarrow\varphi(x,i,F\restrict i)$, provided that one looks only at plays in which there are sufficiently many moves remaining. There cannot be a $\lhd$-least $i$ where the value of $F(i,x)$ is not determined in this sense, and so on just as before. So the strategy must give us a class predicate $F$ and a truth predicate for $\<V,{\in},{\lhd},F>$ witnessing that it solves the desired recursion, as desired.

In conclusion, the principle of clopen determinacy for class games is equivalent to the principle \ETR\ of elementary transfinite recursion along well-founded class relations.
\end{proof}

\section{Proving open determinacy in strong theories}

It follows from theorems~\ref{Theorem.ClopenDeterminacyGivesTruthPredicate} and~\ref{Theorem.ClopenDetIffETR} that the principle of open determinacy for class games cannot be proved in set theories such as \ZFC\ or \GBC, if these theories are consistent, since there are models of those theories that have no satisfaction class for first-order truth. We should now like to prove, in contrast, that the principle of open determinacy for class games \emph{can} be proved in stronger set theories, such as Kelley-Morse set theory \KM, as well as in $\GBC+\Pi^1_1$-comprehension, which is a proper fragment of \KM.

In order to undertake this argument, however, it will be convenient to consider the theory $\KM^+$, a natural strengthening of Kelley-Morse set theory \KM\ that we consider in~\cite{GitmanHamkinsJohnstone:Kelley-MorseSetTheoryAndChoicePrinciplesForClasses}. The theory $\KM^+$ extends \KM\ by adding the {\df class-choice scheme}, which asserts of any second-order formula $\varphi$, that for every class parameter $Z$, if for every set $x$ there is a class $X$ with property $\varphi(x,X,Z)$, then there is a class $Y\of V\times V$, such that for every $x$ we have $\varphi(x,Y_x,Z)$, where $Y_x$ denotes the $x^{\rm th}$ slice of $Y$. Thus, the axiom asserts that if every set $x$ has a class $X$ with a certain property, then we can choose particular such classes and put them together into a single class $Y$ in the plane, such that the $x^{\rm th}$ slice $Y_x$ is a witness for $x$. In~\cite{GitmanHamkinsJohnstone:Kelley-MorseSetTheoryAndChoicePrinciplesForClasses}, we prove that this axiom is not provable in \KM\ itself, thereby revealing what may be considered an unfortunate weakness of \KM. The class-choice scheme can also naturally be viewed as a class collection axiom, for the class $Y$ gathers together a sufficient collection of classes $Y_x$ witnessing the properties $\varphi(x,Y_x,Z)$. In this light, the weakness of \KM\ in comparison with $\KM^+$ is precisely analogous to the weakness of the theory $\ZFCmm$ in comparison with the theory $\ZFCm$ that we identified in~\cite{GitmanHamkinsJohnstone:WhatIsTheTheoryZFC-Powerset?}---these are the theories of \ZFC\ without power set, using replacement or collection $+$ separation, respectively---since in each case the flawed weaker theory has replacement but not collection, which leads to various unexpected failures for the respective former theories.

The natural weakening of the class-choice scheme to the case where $\varphi$ is a first-order assertion, having only set quantifiers, is called the {\df first-order class-choice principle}, and it is expressible as a single assertion, rather than only as a scheme, in $\KM$ and indeed in $\GBC+\ETR$, since in these theories we have first-order truth-predicates available relative to any class. A still weaker axiom makes the assertion only for choices over a fixed set, such as the first-order class $\omega$-choice principle: $$\forall Z\left(\strut\forall n\in\omega\,\exists X\,\varphi(n,X,Z)\implies\exists Y\of\omega\times V\, \forall n\in\omega\,\varphi(n,Y_n,Z)\right),$$where $\varphi$ has only first-order quantifiers, and this is also finitely expressible in $\GBC+\ETR$. In our paper~\cite{GitmanHamkinsJohnstone:Kelley-MorseSetTheoryAndChoicePrinciplesForClasses}, we separate these axioms from one another and prove that none of them is provable in \KM, assuming the consistency of an inaccessible cardinal.

The {\df $\Pi^1_1$-comprehension axiom} is the assertion that for any $\Pi^1_1$ formula $\varphi(x,Z)$, with class parameter $Z$, we may form the class $\set{a\mid \varphi(a,Z)}$. By taking complements, this is equivalent to $\Sigma^1_1$-comprehension.

\begin{theorem}\label{Theorem.KMImpliesOpenDeterminacy}
 Kelley-Morse set theory $\KM$ proves the principle of open determinacy for class games. Indeed, this conclusion is provable in the subtheory consisting of $\GBC$ plus $\Pi^1_1$-comprehension.
\end{theorem}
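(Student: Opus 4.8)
The plan is to run, inside $\GBC+\Pi^1_1$-comprehension, the first of the two classical proofs of the Gale--Stewart theorem sketched in the introduction, invoking comprehension exactly at the point where that argument speaks of the collection of positions from which the open player has a winning strategy. Fix a class game whose winning condition $A\of X^\omega$ is open for player~I; using the global choice principle we may assume $X=\Ord$. The essential observation is that the assertion ``$\sigma$ is a winning strategy for player~I from the position $p$'' is first-order expressible with class parameters $\sigma$ and $A$, since it says only that for every sequence of player~II's moves the resulting play lies in $A$, and individual plays and move sequences are sets. Hence ``player~I has a winning strategy from $p$'' is $\Sigma^1_1$ in $p$ with parameter $A$, and $\Sigma^1_1$-comprehension --- equivalent to $\Pi^1_1$-comprehension by taking complements --- produces the class $W$ of all positions from which player~I has a winning strategy. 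If the empty position lies in $W$, then player~I has a winning strategy and the game is determined.

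So assume the empty position is not in $W$; the claim is that player~II wins by playing, at each of her turns, the move least in the global well-order that keeps the play outside $W$. Three things must be verified. First, every position that is already won for player~I lies in $W$ (there player~I may play arbitrarily), so, since the game is open for player~I, any play that stays forever outside $W$ is not a win for player~I and is therefore a win for player~II. Second, from a position $p\notin W$ with player~I to move, every move $x$ keeps the play outside $W$: otherwise $p\concat x\in W$, witnessed by some strategy, and prepending the move $x$ yields a class winning strategy for player~I from $p$, contradicting $p\notin W$. Third --- the crux --- from a position $p\notin W$ with player~II to move, \emph{some} move keeps the play outside $W$; equivalently, $W$ is closed under the reachability step at player~II's positions, meaning that if every immediate successor $p\concat y$ lies in $W$ then so does $p$.

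This third point is exactly where a naive argument would amalgamate a chosen winning strategy from each of the proper-class-many successors $p\concat y$, a use of the class-choice scheme that is unavailable here and is not even provable in \KM. The plan instead is to recover the \emph{second} classical proof, the theory of ordinal game values, but with values taken in a class well-order possibly longer than $\Ord$. One shows that in $\GBC+\Pi^1_1$-comprehension the monotone, first-order-definable reachability operator $\Gamma(Z)$ --- sending $Z$ to the union of the already-won positions, the player~I positions having some move into $Z$, and the player~II positions all of whose moves lie in $Z$ --- admits a transfinite iteration $\langle W_\alpha\mid\alpha\in L\rangle$ along some class well-order $L$ that reaches a fixed point, and that this fixed point is precisely the $\Sigma^1_1$ class $W$ already in hand. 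Granting this, the winning strategies are read off canonically, with no amalgamation of classes: using the induced game value, assigning to $p$ the least $\alpha$ with $p\in W_\alpha$, together with the global well-order, player~I plays from any valued position to the least successor of strictly smaller value, forcing the value down to an already-won position in finitely many moves, while player~II plays from any unvalued position to the least unvalued successor --- which exists precisely because the iteration has closed off, so that a player~II position all of whose successors are valued would itself be valued.

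The main obstacle is therefore the closing-off of this iteration within $\GBC+\Pi^1_1$-comprehension; this is the content that lies beyond $\GBC$ and beyond $\GBC+\ETR$, consistent with the fact that open determinacy is strictly stronger than clopen determinacy. The idea is that a position $p\in W$ enters the iteration at the stage measured by the rank of the well-founded --- though generally proper-class-branching --- tree of plays from $p$ consistent with a winning strategy, truncated at the first already-won position, and that $\Sigma^1_1$-comprehension is what allows these ranks to be assembled into a single class and bounded by one common class well-order $L$, certifying that $W$ is genuinely the least fixed point of $\Gamma$, attained after $L$-many steps. Since $\GBC+\Pi^1_1$-comprehension is a subtheory of Kelley--Morse set theory, the same argument establishes open determinacy in \KM\ as well, in essentially the manner by which \GBC\ handles the set-like case.
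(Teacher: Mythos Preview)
Your proposal shares with the paper the opening move---form, via $\Sigma^1_1$-comprehension, the class $W$ of positions from which player~I has a winning strategy, and if the root lies outside $W$ direct player~II to stay outside---and you correctly isolate the crux: showing that a player~II position with all successors in $W$ must itself lie in $W$, without amalgamating a proper-class family of strategies via the class-choice scheme. But your proposed resolution has a genuine gap. You want to iterate the monotone first-order operator $\Gamma$ along some class well-order $L$ until it reaches a fixed point $W_\infty$, and then read off canonical strategies from the game values. The entire difficulty, as you acknowledge, is producing an $L$ long enough for the iteration to close; yet your justification for this---that each $p\in W$ enters at the rank of ``the tree of plays from $p$ consistent with \emph{a winning strategy}''---presupposes a choice of winning strategy for every $p\in W$, which is exactly the instance of class choice you set out to avoid. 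Even waiving that, you have not explained how $\Sigma^1_1$-comprehension bounds a proper-class family of class well-orders by a single one. (Note also that the identity $W_\infty = W$ is not automatic: one direction, $\Gamma(W)\subseteq W$ at player~II nodes, \emph{is} the class-choice step, so you cannot assume the $\Sigma^1_1$ class $W$ is already a fixed point of $\Gamma$.)

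The paper resolves the obstacle by an entirely different route. It first runs the naive amalgamation argument under the additional hypothesis of the first-order class-choice principle, and then eliminates that hypothesis by constructing the meta-$L$ hierarchy: using \ETR\ (which follows from $\Pi^1_1$-comprehension) one iterates the constructible hierarchy along class well-orders $\Gamma$ to build coded structures $\langle L_\Gamma,\in_\Gamma\rangle$, and takes $\mathcal{L}$ to be the classes of $L$ realized in some $L_\Gamma$. Canonical codes for meta-ordinals then yield that $\langle L,\in,\mathcal{L}\rangle$ satisfies $\GBC+\Pi^1_1$-comprehension together with the first-order class-choice principle; relativizing to a class $Z$ coding $V$ and the given game, one obtains a structure $\langle V,\in,\mathcal{S}\rangle$ with the same sets and these stronger axioms, in which the game is determined by the first part, and the winning strategy transfers back by absoluteness. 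This meta-$L$ construction is the substantive content you are missing; your game-value approach may ultimately be completable, but making the iteration close would seem to require machinery of comparable weight, and as written the proposal does not supply it.
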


\noindent We are unsure whether the strictly weaker theory of $\GBC+\ETR$ suffices for this conclusion; see additional discussion in section \ref{Section.Questions}.

\begin{proof}
Assume \GBC\ plus $\Pi^1_1$-comprehension. In order to make our main argument more transparent, we shall at first undertake it with the additional assumption that the first-order class-choice principle holds (thus, we work initially in a fragment of $\KM^+$). Afterwards, we shall explain how to eliminate our need for the class-choice principle, and thereby arrive at a proof using just $\GBC$ plus $\Pi^1_1$-comprehension.

Consider any open class game $A\of X^\omega$, where $A$ is the open winning condition and $X$ is the class of allowed moves. We shall show the game is determined. To do so, notice that for any position $p\in X^\ltomega$, the assertion that a particular class function $\sigma:X^\ltomega\to X$ is a winning strategy for player I in the game proceeding from position $p$ is an assertion about $\sigma$ involving only first-order quantifiers; one must say simply that every play of the game that proceeds from $p$ and follows $\sigma$ on player I's moves after that, is in $A$. Thus, the assertion that player I has a winning strategy for the game starting from position $p$ is a $\Sigma^1_1$ assertion about $p$. Using $\Pi^1_1$-comprehension, therefore, we may form the class
 $$W=\left\{\ p\in X^\ltomega\ \mathrel{\hbox{\huge$|$}}\quad\raise.5em\vtop{\hbox{Player I has a winning strategy in the}\hbox{game proceeding from position $p$}}\ \right\}.$$
With this class, we may now carry out a class analogue of one of the usual soft proofs of the Gale-Stewart theorem, which we mentioned in the introduction of this article. Namely, if the initial (empty) position of the game is in $W$, then player I has a winning strategy, and we are done. Otherwise, the initial node is not in $W$, and we simply direct player II to avoid the nodes of $W$ during play. If this is possible, then it is clearly winning for player II, since he will never land on a node all of whose extensions are in the open class, since such a node is definitely in $W$, and so he will win. To see that player II can avoid the nodes of $W$, observe simply that at any position $p$, if it is player II's turn to play, and player I does not have a strategy in the game proceeding from $p$, then we claim that there must be at least one move that player II can make, to position $p\concat x$ for some $x\in X$, such that $p\concat x\notin W$. If not, then $p\concat x\in W$ for all moves $x$, and so for each such $x$ there is a strategy $\tau_x$ that is winning for player I in the game proceeding from $p\concat x$. By the first-order class-choice principle (and this is precisely where we use our extra assumption), we may gather such strategies $\tau_x$ together into a single class and thereby construct a strategy for player I that proceeds from position $p$ in such a way that if player II plays $x$, then player I follows $\tau_x$, which is winning for player I. Thus, there is a winning strategy for player I from position $p$, contradicting our assumption that $p\notin W$, and thereby establishing our claim. So if $p\notin W$ and it is player II's turn to play, then there is a play $p\concat x$ that remains outside of $W$. Similarly, if $p\notin W$ and it is player I's turn to play, then clearly there can be no next move $p\concat y$ placing it inside $W$, for then player I would have also had a strategy from position $p$. Thus, if the initial position is not in $W$, then player II can play so as to retain that property (using global choice to pick a particular move realizing that situation), and player I cannot play so as to get inside $W$, and this is therefore a winning strategy for player II. So the game is determined.

The argument above took place in the theory $\GBC+\Pi^1_1$-comprehension $+$ the first-order class-choice principle. And although it may appear at first to have made a fundamental use of the class-choice principle, we shall nevertheless explain how to eliminate this use. The first observation to make is that $\Pi^1_1$-comprehension implies the principle \ETR\  of elementary transfinite recursion along any well-founded relation. To see this, suppose that $\lhd$ is any well-founded relation on a class $I$ and $\varphi(x,i,F,Z)$ is a formula to serve as the recursive rule. The class of $i\in I$ that are in the domain of some partial solution to the recursion is $\Sigma^1_1$-definable. And furthermore, all such partial solutions must agree on their common domain, by an easy inductive argument along $\lhd$. It follows that the union of all the partial solutions is $\Sigma^1_1$-definable and therefore exists as a class, and it is easily seen to obey the recursion rule on its domain. So it is a maximal partial solution. If it is not defined on every slice in $I$, then there must be a $\lhd$-minimal element $i$ whose $i^{\rm th}$ slice is not defined; but this is impossible, since we could apply the recursive rule once more to determine the slice $F_i=\set{x\mid \varphi(x,i,F\restrict i,Z)}$ to place at $i$, thereby producing a partial solution that includes $i$. So the maximal partial solution is actually a total solution, verifying this instance of the transfinite recursion principle.

Next, we shall explain how to continue the constructibility hierarchy beyond $\Ord$. This construction has evidently been discovered and rediscovered several times in set theory, but rarely published; the earliest reference appears to be the dissertation of Leslie Tharp~\cite{Tharp1965:ConstructibilityInImpredicativeSetTheory}, although Bob Solovay reportedly also undertook the construction as an undergraduate student, without publishing it. In the countable realm, of course, the analogous construction is routine, where one uses reals to code arbitrary countable structures including models of set theory of the form $\<L_\alpha,\in>$. For classes, suppose that $\Gamma=\<\Ord,\leq_\Gamma>$ is a {\df meta-ordinal}, which is to say, a well-ordered class relation $\leq_\Gamma$ on $\Ord$; this relation need not necessarily be set-like, and the order type can reach beyond $\Ord$. By \ETR, we may iterate the constructible hierarchy up to $\Gamma$, and thereby produce a class model $\<L_\Gamma,\in_\Gamma>$ of $V=L$, whose ordinals have order-type $\Gamma$. Specifically, we reserve a class of nodes to be used for representing the new ``(meta-)sets'' at each level of the $L_\Gamma$ hierarchy, and define $\in_\Gamma$ recursively, so that at each level, we add all and only the sets that are definable (from parameters) over the previous structure. To be clear, the domain of the structure $\<L_\Gamma,{\in_\Gamma}>$ is a class $L_\Gamma\of V$, and the relation $\in_\Gamma$ is not the actual $\in$ relation, but nevertheless $\in_\Gamma$ is a well-founded extensional relation in our original model, and the structure $\<L_\Gamma,\in_\Gamma>$ looks internally like the constructible universe. Thus, we have what might be termed merely a code for or presentation of the fragment $L_\Gamma$ of the constructibility hierarchy up to $\Gamma$, which someone outside the universe might prefer to think of as an actual transitive set.

In order to speak of $L_\Gamma$ in our \GBC\ context, then, we must be aware that different choices of $\Gamma$ will lead to different presentations, with sets being represented differently and by different sets. Nevertheless, we may assume without loss that the actual sets in $L=L_{\Ord}$ are represented in this presentation in some highly canonical way, so that the ordinals are represented by themselves, for example, and the other sets are represented by their own singletons (say), and so in particular, all the various $L_\Gamma$ will agree on their $\in_\Gamma$ relations for sets constructed before $\Ord$. Also, using the principle of first-order transfinite recursion, it is easy to see that any two meta-ordinals $\Gamma$ and $\Gamma'$ are comparable, in the sense that one of them is (uniquely) isomorphic to an initial segment of the other, and similarly the structures $L_\Gamma$ and $L_{\Gamma'}$ admit such coherence as well; in particular, if $\Gamma$ and $\Gamma'$ are isomorphic, then so also are the structures $L_\Gamma$ and $L_{\Gamma'}$. Consider the meta-ordinals $\Gamma$ for which there is a larger meta-ordinal $\Theta$, such that $L_\Theta$ has as an element a well-ordered structure $\Gamma'=\<\Ord,\leq_{\Gamma'}>$ with order-type isomorphic to $\Gamma$. In a sense, these are the meta-ordinals below $(\Ord^+)^L$. In this case, there will be an $L$-least such code $\Gamma'$ in $L_\Theta$. And furthermore, any other meta-ordinal $\Theta'$ which constructs such a code will agree on this $L$-least code. Since we assumed that the ordinals were represented as themselves in $L_\Theta$, we may view $\Gamma'$ as a meta-ordinal in our original model. Thus, the meta-ordinals $\Gamma$ realized by a relation in some $L_\Theta$ have \emph{canonical} codes, the meta-ordinals that are $L$-least with respect to some (and hence all sufficiently large) $L_\Theta$. There is exactly one such code for each meta-ordinal order type that is realized inside any $L_\Theta$.

Now, let $\mathcal L$ be the collection of classes $B\of L$ that are realized as an element in some $L_\Gamma$---these are the classes of the meta-$L$ that are contained in the actual $L$---and consider the model $\mathscr L=\<L,{\in},\mathcal L>$. It is not difficult to see that this is a model of \GBC, precisely because the $L$-hierarchy closes under definability at each step of the recursion. Furthermore, the existence of canonical codes will allow us to show that this model satisfies the first-order class-choice principle. Suppose that $\mathscr L\satisfies \forall b\,\exists X\, \varphi(b,X)$, where $\varphi$ has only first-order quantifiers. For each set $b$, there is a class $X$ with property $\varphi(b,X)$, and such a class $X$ exists as a set in some $L_\Gamma$ for some meta-ordinal $\Gamma$. We may consider $\Gamma$ to be a canonical code for a meta-ordinal which is minimal with respect to the property of having such an $X$, and in this case, the class $\Gamma=\Gamma_b$ is $\Sigma^1_1$-definable (and actually $\Delta^1_1$-definable) from $b$. So the map $b\mapsto\Gamma_b$ exists as a class in the ground model, and we may therefore form a meta-ordinal $\Theta$ that is larger than all the resulting meta-ordinals $\Gamma_b$. Inside $L_\Theta$, we may select the $L$-least $X_b$ witnessing $\varphi(b,X_b,Z)$, and thereby form the class $\set{(b,c)\mid c\in X_b}$, which fulfills this instance of the first-order class-choice principle (and the argument easily accommodates class parameters).

A similar idea shows that $\mathscr L$ satisfies $\Pi^1_1$-comprehension, provided that this was true in the original model (and indeed $\mathscr L$ satisfies \KM, if this was true in the original model, and in this case one can also verify the class-choice scheme in $\mathscr L$, without requiring this in $V$, which shows that $\Con(\KM)\implies\Con(\KM^+)$; see~\cite{GitmanHamkinsJohnstone:Kelley-MorseSetTheoryAndChoicePrinciplesForClasses}.) It will be more convenient to establish $\Sigma^1_1$-comprehension, which is equivalent. Consider a formula of the form $\exists X\, \varphi(b,X)$, where $\varphi$ has only first-order quantifiers. The class $B=\set{b\in L\mid \exists X\in\mathcal{L}\,\,\varphi(b,X)}$ is $\Sigma^1_1$-definable and therefore exists as a class in our original universe. We need to show it is in $\mathcal{L}$. For each $b\in B$, there is a class $X$ such that $X\in\mathcal{L}$ and $\varphi(b,X)$, and such a class $X$ is constructed in some $L_\Gamma$ at some minimal meta-ordinal stage $\Gamma$, which we may assume is a canonical code. Thus, the map $b\mapsto \Gamma_b$ is $\Sigma^1_1$-definable, and so it exists as a class. Thus, we may form a single meta-ordinal $\Theta$ larger than all the $\Gamma_b$, and in $L_\Theta$, we may define the set $B$. So $B\in\mathcal{L}$, verifying this instance of $\Sigma^1_1$-comprehension in $\mathscr{L}$, as desired.

One may now check that the construction of the previous paragraph relativizes to any class $Z\of\Ord$, leading to a model $\<L[Z],{\in},\mathcal{S}>$ that satisfies $\GBC+\Pi^1_1$-comprehension $+$ the first-order class-choice principle, in which $Z$ is a class. One simply carries the class parameter $Z$ through all of the previous arguments. If $Z$ codes all of $V$, then the result is a model $\<V,{\in},\mathcal{S}>$, whose first-order part has the same sets as the original model $V$.

Using this, we may now prove the theorem. Consider any open class game $A\of X^\omega$, where $X$ is the class of allowed moves. Let $Z\of\Ord$ be a class that codes in some canonical way every set in $V$ and also the classes $X$ and $A$. The resulting structure $\<V,{\in},\mathcal{S}>$ described in the previous paragraph therefore satisfies $\GBC+\Pi^1_1$-comprehension $+$ the first-order class-choice principle. The game $A\of X^\omega$ exists in this structure, and since it is open there, it follows by the first part of the proof of this theorem that this game is determined in that structure. So there is a strategy $\sigma\in\mathcal{S}$ for the game $A$ that is winning for one of the players. But this is absolute to our original universe, because the two universes have exactly the same sets and therefore exactly the same plays of the game. So the game is also determined in our original universe, and we have thus verified this instance of the principle of open determinacy for class games.
\end{proof}

One might view the previous argument as a proper class analogue of Blass's result~\cite{Blass1972:ComplexityOfWinningStrategies} that computable games have their winning strategies appearing in the $L$-hierarchy before the next admissible set, since we found the winning strategies for the open class game in the meta-$L$ hierarchy on top of the universe. Nevertheless, we are unsure sure exactly what it takes in the background theory to ensure that the meta-$L$ structure is actually admissible.

\section{The strength of ETR}

To provide further context of the second-order set theories mentioned in this article, we should like next to explain Kentaro Sato's surprising result separating $\ETR$ from $\Delta^1_1$-comprehension over \GBC. This result is surprising, because it is an instance where the analogy between results in second-order arithmetic and those in second-order set theory break down. In second-order arithmetic, the theory $\ATR_0$, consisting of $\ACA_0$ together with the principle of transfinite recursion, implies $\Delta_1^1$-comprehension and is in fact, much stronger (by Thm~VIII.4.20 in~\cite{Simpson2009:SubsystemsOfSecondOrderArithmetic}, $\ATR_0$ proves that there is a countable coded $\omega$-model of ${\rm ACA}_0$+$\Delta^1_1$-comprehension). In second-order set theory, the relationship between $\ETR$ and $\Delta_1^1$-comprehension is very different. We know that the theory $\GBC+\Delta_1^1$-comprehension cannot imply $\ETR$ because it is $\Pi^1_2$-conservative over $\GBC$ (see Fujimoto~\cite{Fujimoto2012:Classes-and-truths-in-set-theory}, Thm~15). But Sato's result shows that the principle of $\Delta^1_1$-comprehension is not provable from \GBC+\ETR, and in fact, $\GBC+\ETR+\Delta^1_1$-comprehension has strictly stronger consistency strength than $\GBC+\ETR$ alone. Using some of the ideas of the more general \cite[theorem~33]{Sato2014:Relative-predicativity-and-dependent-recursion-in-second-order-set-theory-and-higher-order-theories}, we provide here a streamlined proof of just this result.

\begin{theorem}[Sato]
 The theory $\GBC+\ETR+\Delta^1_1$-comprehension proves the consistency of $\GBC+\ETR$. Consequently, if consistent, the theory $\GBC+\ETR$ does not prove $\Delta^1_1$-comprehension.
\end{theorem}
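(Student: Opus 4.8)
The plan is to prove the first assertion --- that $\GBC+\ETR+\Delta^1_1$-comprehension proves $\Con(\GBC+\ETR)$ --- by constructing, inside that theory, a class-coded model of $\GBC+\ETR$; the second assertion is then immediate from \Godel's second incompleteness theorem, for if $\GBC+\ETR$ proved $\Delta^1_1$-comprehension it would prove its own consistency, so (if consistent) it cannot, and hence also $\GBC+\ETR+\Delta^1_1$-comprehension is strictly stronger in consistency strength. It is worth recalling at the outset that $\GBC$ is finitely axiomatizable and that, by the remark following theorem~\ref{Theorem.ETR-iff-iterated-truth-predicate}, $\ETR$ is a single second-order sentence; hence ``$\mathbb{M}\satisfies\GBC+\ETR$'' is a single assertion about a class structure $\mathbb{M}$, and the step from ``there is such a model'' to $\Con(\GBC+\ETR)$ is just the soundness direction of the completeness theorem, easily formalized in a theory which, via $\ETR$, already proves $\Con(\ZFC)$ and much more.

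For the model I would use the meta-constructible hierarchy. Exactly as in the proof of theorem~\ref{Theorem.KMImpliesOpenDeterminacy} --- where only $\ETR$, and not $\Pi^1_1$-comprehension, is needed to build the individual stages --- fix a class $Z\of\Ord$ coding the whole universe and, by $\ETR$, iterate the relativized constructible hierarchy $\langle L_\Gamma[Z],{\in_\Gamma}\rangle$ along every meta-ordinal $\Gamma$, arranging that the actual sets are represented canonically so that the various presentations cohere and agree with $V$ below the canonical ordinal $\Ord$. Now apply $\Delta^1_1$-comprehension to single out a meta-ordinal $\Lambda$ that is \emph{meta-admissible}, meaning $\langle L_\Lambda[Z],{\in_\Lambda}\rangle\satisfies\KP$, together with the induced class structure $\mathbb{M}=\langle V,{\in},\mathcal{C}\rangle$ whose second-order part $\mathcal{C}$ comprises the subclasses of $V$ realized at some stage of $L_\Lambda[Z]$. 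The purpose of the canonical meta-ordinal codes (as developed in the proof of theorem~\ref{Theorem.KMImpliesOpenDeterminacy}) is to make the relevant definitions $\Delta^1_1$ rather than merely $\Sigma^1_1$: once attention is restricted to the canonical code, the existential quantifier over presentations of the relevant initial segment of the hierarchy becomes inessential, so that both meta-admissibility of $\Lambda$ and membership in $\mathcal{C}$ acquire matching $\Sigma^1_1$ and $\Pi^1_1$ descriptions, the coherence of the $L_\Gamma[Z]$ guaranteeing the match. This is the streamlining over Sato's more general construction: the model is taken to be precisely the meta-constructible hierarchy truncated at its least admissible level.

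It then remains to check that $\mathbb{M}\satisfies\GBC+\ETR$. That $\mathbb{M}\satisfies\GBC$ goes through as in theorem~\ref{Theorem.KMImpliesOpenDeterminacy}: the $L$-hierarchy closes under first-order definability at each step, yielding first-order class comprehension and class replacement, and global choice comes from the definable well-ordering of $L[Z]$, which lies in $\mathcal{C}$. For $\ETR$ in $\mathbb{M}$, let $\langle I,{\lhd}\rangle$ and a parameter both lie in $\mathcal{C}$, realized by some meta-stage $\Gamma_0<\Lambda$. Since the first-order part of $\mathbb{M}$ is all of $V$, any infinite $\lhd$-descending sequence is a set, hence present; so $\langle I,{\lhd}\rangle$ is genuinely well-founded, and being an element of the admissible structure $L_\Lambda[Z]$ it is ranked there, so $\mathrm{ot}(\lhd)<\Lambda$. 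The recursion along $\lhd$ has a solution in $V$ by $\ETR$, and that solution is built by a recursion of length $\mathrm{ot}(\lhd)$ whose stages are first-order definable over the preceding data, so it is realized by meta-stage $\Gamma_0+\mathrm{ot}(\lhd)\cdot\omega<\Lambda$, the inequality holding by admissibility. Hence the solution belongs to $\mathcal{C}$, so $\mathbb{M}\satisfies\ETR$. Internalizing, $\GBC+\ETR+\Delta^1_1$-comprehension $\vdash$ ``$\mathbb{M}\satisfies\GBC+\ETR$'' $\vdash\Con(\GBC+\ETR)$, and we are done.

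The main obstacle I anticipate is the middle step: showing, from $\Delta^1_1$-comprehension and nothing stronger, that a meta-admissible meta-ordinal $\Lambda$ exists and that the truncated meta-$L$ hierarchy below it forms a genuine class with a $\Delta^1_1$ membership relation. This is where the canonical-code bookkeeping must be executed with care --- in particular one must verify that the obvious $\Sigma^1_1$ descriptions really do have $\Pi^1_1$ counterparts, so that no instance of $\Sigma^1_1$- or $\Pi^1_1$-comprehension is being smuggled in. A secondary delicate point is the $\ETR$-closure of $\mathcal{C}$: one must confirm that the potential regress, whereby well-orders appearing at higher meta-stages carry ever larger order types, is absorbed by the admissibility of $\Lambda$, in just the way admissibility of $\omega_1^{\mathrm{CK}}$ absorbs the analogous regress for hyperarithmetic sets.
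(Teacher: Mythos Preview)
Your approach is genuinely different from the paper's, and the gap you flag in your final paragraph is real and, as far as I can tell, unfilled. The paper never touches admissibility. Instead it builds the model by a direct $\omega$-step closure: fix a global well-order, let $A^0$ encode all classes first-order definable from it together with all \ETR-solutions along well-founded relations definable from it; given $A^n$, let $A^{n+1}$ do the same with class parameters taken from the slices of $A^n$. Each $A^n$ exists because each slice exists by \ETR\ and, since definable classes and recursion solutions are \emph{unique}, the whole $A^n$ has matching $\Sigma^1_1$ and $\Pi^1_1$ descriptions; so $\Delta^1_1$-comprehension delivers it. The union $A$ encoding all $A^n_x$ is then likewise $\Delta^1_1$, and the resulting class structure over $V$ is visibly closed under first-order comprehension and under solving first-order recursions, hence a model of $\GBC+\ETR$. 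Reflection to a set gives $\Con(\GBC+\ETR)$. No admissible levels, no meta-$L$ truncation, no order-type bounds.

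Your route via a meta-admissible $\Lambda$ is more elegant in spirit (it is the class-theoretic analogue of cutting the hyperarithmetic hierarchy at $\omega_1^{CK}$), but you have not shown that such a $\Lambda$ exists in $\GBC+\ETR+\Delta^1_1$-comprehension, and this is not a minor bookkeeping matter. The paper itself, after the proof of theorem~\ref{Theorem.KMImpliesOpenDeterminacy} and again in section~\ref{Section.Questions}, explicitly leaves open which strengthening of \GBC\ suffices to make the meta-$L$ structure admissible. By analogy with second-order arithmetic, where ``for every $X$ there is an admissible set containing $X$'' sits at the level of $\Pi^1_1$-comprehension rather than $\Delta^1_1$-comprehension, one should be suspicious that $\Delta^1_1$-comprehension alone produces a meta-admissible level. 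Your $\Delta^1_1$ bookkeeping for $\mathcal{C}$ and for the canonical codes is plausible \emph{conditional} on already having $\Lambda$ in hand, and your \ETR-closure argument via ranking $\lhd$ inside $L_\Lambda[Z]$ is correct \emph{given} admissibility; but the existence step is doing all the work and you have not supplied it. The paper's $\omega$-iteration sidesteps this entirely, which is precisely its advantage.
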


\begin{proof}
We shall argue in the theory $\GBC+\ETR+\Delta^1_1$-comprehension that there is an encoded collection of classes that forms a model of $\GBC+\ETR$. That is, we shall prove that there is a class $A\of V\times V$, such that the structure $\<V,{\in},\set{A_x\mid x\in V}>$, built from the classes encoded by $A$, is a model of $\GBC+\ETR$, where $A_x=\set{y\mid (x,y)\in A}$ is the $x^{\rm th}$ slice of $A$. By reflecting this situation down to a set, the consistency of $\GBC+\ETR$ follows.

Recall that in $\GBC+\ETR$ we may define the unique first-order truth predicate relative to any class parameter. So in this theory we may freely refer to first-order truth relative to any class parameter. Fix any global well-order $\trianglelt$, and let $A^0$ be an encoded list of all classes that are first-order definable in $\<V,{\in},\trianglelt>$, together with the (unique) solutions to all first-order recursions that are undertaken on well-orders that are definable over this structure. For example, we may place the class defined by formula $\varphi(\cdot,\vec a)$ with set parameters $\vec a$ on the $\<\varphi,\vec a>^{\rm th}$ slice of $A^0$; and similarly, given a first-order recursion $\psi(x,b,F,\vec c)$, defined along a well-founded relation $\leq$ defined by first-order formula $\theta(\cdot,\cdot,\vec e\,)$, we may place the unique solution of the recursion $F$, which exists by \ETR, on the $\<\varphi,\vec c,\theta,\vec e\,>^{\rm th}$ slice of $A^0$. The class $A^0$ exists, since first of all, each slice exists by \ETR, and consequently the whole class $A^0$ is $\Delta^1_1$-definable, on account of the uniqueness of the definable classes and the solutions of the recursions. (A subtle point: one might have hoped to show that $A^0$ exists just in \ETR\ itself, by a recursion that proceeds in parallel through the recursions of each slice, but that way of doing the recursion would actually be $\Delta^1_1$ rather than first-order, because of the need to refer uniformly to the truth of the recursive steps for the partial solutions of the recursion.)

Given a class $A^n\of V\times V$, which we view as encoding the collection of its slices, we similarly define $A^{n+1}$ to be a class encoding all the first-order definable classes that are definable in $\<V,{\in},X>$ for any class $X$ that is coded in $A^n$, as well as all solutions to first-order recursions along a well-founded first-order definable relations, allowing class parameters from amongst the classes coded in $A^n$. If we place the classes into $A^{n+1}$ in the same kind of canonical manner, then $A^{n+1}$ will be $\Delta^1_1$-definable, because the definable classes and solutions to those recursions are unique.

It follows that the unifying class $A=\set{(\<n,x>,y)\mid y\in A^n_x}$, which encodes all of the classes $\set{A^n_x\mid n\in\omega, x\in V}$ that arise in our construction, is also $\Delta^1_1$ definable. We shall complete the argument by proving that the family of classes encoded by $A$ gives rise to a structure $\<V,{\in},\set{A^n_x\mid n\in\omega, x\in V}>$ that is a model of $\GBC+\ETR$. This is almost immediate by the design of the construction. Specifically, for \GBC, any class that is definable from some class parameter $A^n_x$ is added as a slice of the next stage $A^{n+1}$, and so it appears as $A^{n+1}_u$ for some $u$, and is consequently in our family of classes. Similarly, any solution $F$ to a first-order recursion over a first-order definable well-founded relation, defined relative to some classes appearing as slices in $A$, will be added to $A^{n+1}$ after any stage $n$ by which the class parameters have appeared in $A^n$. So \ETR\ will hold for our classes, as desired.

Finally, we note that the existence of a single class encoding a model of $\GBC+\ETR$ implies $\Con(\GBC+\ETR)$, because we may apply the reflection theorem to produce a set-sized model.
\end{proof}

\section{Questions}\label{Section.Questions}

The work of this article suggests numerous questions for further investigation. Can we weaken the assumption of $\Pi^1_1$-comprehension in theorem~\ref{Theorem.KMImpliesOpenDeterminacy} to use only the principle \ETR\ of elementary transfinite recursion over well-founded class relations? If so, it would follow that open determinacy and clopen determinacy for class games are both equivalent over \GBC\ to the principle of transfinite recursion, which would resonate with the corresponding situation in reverse mathematics for games on the natural numbers, where both open determinacy and clopen determinacy are equivalent to the principle of transfinite recursion over $\ACA_0$. But perhaps open determinacy is strictly stronger than clopen determinacy over \GBC. Which strengthening of \GBC\ suffices to prove the meta-$L$ structure we construct in theorem~\ref{Theorem.KMImpliesOpenDeterminacy} is admissible? If this is possible in \GBC\ plus \ETR, then the proper class analogue of the Blass result mentioned earlier might show that open determinacy and clopen determinacy for classes are equivalent over \GBC. Is there a class game analogue of Martin's proof~\cite{Martin1975:BorelDeterminacy} of Borel determinacy? What does it take to prove the class analogue of Borel determinacy for class games? There is a natural concept of class Borel codes, which in $\KM^+$ gives rise to a collection of classes that is the smallest collection of classes containing the open classes and closed under countable unions and complements. Are all such class games determined? If $\kappa$ is an inaccessible cardinal, then the full second-order structure $\<V_\kappa,{\in},V_{\kappa+1}>$ is a model of $\KM^+$ that satisfies Borel determinacy for class games. Is there a proper class analogue of Harvey Friedman's famous proof~\cite{Friedman1971:HigherSetTheoryAndMathematicalPractice} that Borel determinacy requires strength? We have taken up all these questions in current work.

\bibliographystyle{alpha}
\bibliography{MathBiblio,HamkinsBiblio}

\end{document}